\theoremstyle{plain}
\newtheorem{theorem}{Theorem}[section]
\newtheorem{proposition}[theorem]{Proposition}
\theoremstyle{definition}
\newtheorem{definition}[theorem]{Definition}
\newtheorem{remark}[theorem]{Remark}
\newtheorem{example}[theorem]{Example}
\numberwithin{equation}{section}
\numberwithin{table}{section}
\newcommand{\Lie}[1]{\operatorname{\rm{#1}}}
\newcommand{\lie}[1]{\operatorname{\mathfrak{#1}}}
\newcommand{\G}{\Lie{G}}
\newcommand{\LK}{\Lie{K}}
\newcommand{\GL}{\Lie{GL}}
\newcommand{\SO}{\Lie{SO}}
\newcommand{\Spin}{\Lie{Spin}}
\newcommand{\Sp}{\Lie{Sp}}
\newcommand{\SU}{\Lie{SU}}
\newcommand{\g}{\lie{g}}
\newcommand{\lk}{\lie{k}}
\newcommand{\so}{\lie{so}}
\newcommand{\su}{\lie{su}}
\newcommand{\lsp}{\lie{sp}}
\newcommand{\Hodge}{{*}}
\newcommand{\hook}{{\lrcorner\,}}
\newcommand{\bC}{{\mathbb C}}
\newcommand{\bH}{{\mathbb H}}
\newcommand{\bR}{{\mathbb R}}
\newcommand{\sS}{{\mathcal S}}
\begin{document}

\title[Instantons on flat space]{Instantons on flat space: Explicit constructions}

\author{Jason D.~Lotay} \address[J.\,D.~Lotay]{Mathematical Institute\\ University of Oxford\\ Woodstock Road\\ Oxford OX2 6GG\\ United Kingdom.}  \email{jason.lotay@maths.ox.ac.uk}

\author{Thomas Bruun Madsen}
\address[T.\,B.~Madsen]{School of Computing and Engineering\\ University of West London\\
  St Mary's Road, Ealing\\ London W5 5RF\\ United Kingdom.}
\email{thomas.madsen@uwl.ac.uk}

\begin{abstract}
In this note, we revisit some well-known examples of instantons on flat space that were originally discovered in
the physics literature. In particular, we explain how the basic instanton on \( \bR^4 \), with its flat hyperk\"ahler structure, has natural generalisations to \( \bR^7 \) and \( \bR^8 \) viewed as
flat \( \G_2 \)- and \( \Spin(7) \)-manifolds, respectively. We also provide the details of an arguably less known construction of ASD instantons on \( \bH^n \), in the sense of quaternionic geometry.
\end{abstract}

\maketitle

\section{Introduction}
\label{sec:intro}
The idea of generalising the concept of (anti-)self-duality to higher dimensions has been around for about four decades, gaining increased attention
with the advances in special and exceptional holonomy (see, e.g.,  \cite{Karigiannis-al:G2Survey} for a recent collection of surveys) and the potential link between higher-dimensional gauge theory and enumerative invariants, initiated in the seminal papers \cite{DonaldsonThomas:higherGauge,DonaldsonSegal:higherGauge}. 
Instantons in higher dimensions, which are connections whose curvature satisfy these generalised anti-self-duality conditions, have been explored both on non-compact and compact spaces, with most of the current interest centering around \( \G_2 \)-instantons, e.g., \cite{LotayOliveira:G2inst,Nordstrom-al:G2inst,SaEarpWalpuski:TCS,Walpuski:Kummer,Walpuski:G2Fueter}. 

The purpose of this note is to revisit
some of the early constructions of instantons  that appeared in the physics literature \cite{BPST:YangMills,Fairlie-N:instanton,Ivanova-P:instanton,Nicolai-G:instanton}, rephrasing explicit constructions in a way that highlights similarities and contextualising them in view of more recent developments. In particular, we shall explain that the basic (or BPST) instanton on \( \bR^4 \), with its flat hyperk\"ahler structure, has natural generalisations to \( \bR^7 \) and \( \bR^8 \) viewed as
flat \( \G_2 \)- and \( \Spin(7) \)-manifolds, respectively.  This generalization is achieved via an ansatz based on the groups associated with the ambient flat structure. Surprisingly, however, this natural ansatz that we present does not produce instantons for \(  \bR^n \) equipped with the remaining Ricci-flat geometries from Berger's list \cite{Berger:Holonomy}. 

\subsection{\(\G \)-instantons} To set the scene, consider having a principal \( \LK \)-bundle
\( P\to M \)
over an oriented Riemannian \( n \)-manifold
\( M \).
Given a connection form \( \omega\in \Omega^1(P;\lk) \)
the associated curvature will be \( \Omega\in\Omega^2(M;\lk_P) \),
where \( \lk \)
is the Lie algebra of \(\LK\).
When \( M \)
comes equipped with a \( \G \)-structure, for some
\( \G\subset\SO(n) \), then since we can identify the 2-forms on \(M\) at each point with the Lie algebra of \( \SO(n) \) we can decompose the bundle of \( 2 \)-forms as
\begin{equation*}
  \Lambda^2T^*M\cong \so(n)\cong \g\oplus \g^\perp,
\end{equation*}
where the fibres of \( \g \)
are given by the Lie algebra of \( \G \).
This splitting gives us a way of distinguishing connections that are
particularly adapted to the geometry (cf.~\cite{Carrion:Instanton}).

\begin{definition}
  \label{def:instanton-def}
  A connection \( \omega \)
  on \( P \)
  is called a \( \G \)-\emph{instanton}
  if the \( 2 \)-form
  part of its curvature \( \Omega \)
  takes values in the subbundle \( \g\subset\Lambda^2T^*M \).
\end{definition}

A natural setting where we have a distinguished \( \G \)-structure
on \( M \)
is when the metric on \( M \)
has special holonomy \( \G \)
(and thus the \( \G \)-structure
is torsion-free). Of course, holonomy reduction is trivially obtained on flat space \( M=\bR^n \) which is our focus in this note.  Taking the group \( \G \) in question to be \(\SU(2)\), \(\G_2\) and \(\Spin(7)\) in dimensions \( 4 \), \( 7 \) and \( 8 \) leads to the notions of (anti-)self-dual, \(\G_2\)- and \(\Spin(7)\)-instantons, respectively.

\subsection{Results} 
The main observation of this note is that there is an ansatz which gives a unified perspective on the construction of the basic \(\G\)-instantons on flat spaces. 

Let \( P_{\G}\cong\G\times\bR^n \) be the reduction of the principal frame bundle of \( \bR^n \)
corresponding to a choice of \( \G \)-structure on \(\bR^n\).  We  construct a family of  connections on \(P_{\G}\) as follows. 

Away from the origin, we have the usual identification
\( \bR^n\setminus\{0\}\cong\bR_+\times \sS^{n-1}\),
with 
radial coordinate \( r\) of \( \bR_+ \) and we write \( \partial_r=\partial/\partial r\).
Let \( e_j \), for
\( j=1,\ldots,\dim\G \),
define a Killing orthonormal basis of the Lie algebra
\( \g \)
of \( \G \)
inside \( \so(n) \)
and denote by \( \beta_j \)
the elements defining a corresponding basis for \( \g \) inside \( \Lambda^2(\bR^n)^*\cong\so(n) \).
Then define \( 1 \)-forms
\( \alpha_j \)
on \( \sS^{n-1} \)
via 
\begin{equation*}
\alpha_j=\partial/\partial_r\hook\beta_j|_{\sS^{n-1}} .
\end{equation*}
 We let \( \omega\in\Omega^1(P_{\G};\lie g) \) be of the form
\begin{equation}
  \label{eq:conn-ansatz}
  \omega=A(r)=ra(r)\sum_{j=1}^{\dim\G}\alpha_j\otimes e_j
\end{equation} 
for some function \(a:\bR_+\to\bR\), where we may view \( \omega \) as a family of connections \( A(r) \) on \( \G\times\mathcal{S}^{n-1} \).  We have added the factor of \( r \) in \eqref{eq:conn-ansatz} for convenience.  

We then show the following.

\begin{theorem}\label{thm:basic}
For \(\G=\SU(2),\G_2,\Spin(7)\) for \( n=4,7,8 \) respectively, the ansatz \eqref{eq:conn-ansatz} yields a \( \G \)-instanton on \( \bR^n \) with structure group \( \G \) for
\begin{equation*}
 a(r)=c_n\frac{r}{r^2+C},
\end{equation*}
where \( C>0 \) is a constant and \( c_n \) is a fixed constant only depending on \( n \).  These are the basic \( \G \)-instantons on \( \bR^n \) for \( n=4,7,8\).
\end{theorem}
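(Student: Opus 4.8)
The plan is to compute the curvature $\Omega=d\omega+\tfrac12[\omega\wedge\omega]$ of the ansatz \eqref{eq:conn-ansatz} on $\bR_+\times\sS^{n-1}$, split it into a radial part (containing $dr$) and a tangential part, and then isolate the condition under which its $2$-form values remain in $\g$. Writing $f(r)=ra(r)$, so that $\omega=f(r)\sum_j\alpha_j\otimes e_j$ with $\alpha_j$ and the relevant tangential forms extended scale-invariantly from $\sS^{n-1}$, I would first record the two structural facts that follow from each $\beta_j$ being a constant — hence closed — $2$-form on $\bR^n$ that scales as $\lambda^2$ under dilations $x\mapsto\lambda x$. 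Decomposing
\[
  \beta_j=r\,dr\wedge\alpha_j+r^2\sigma_j,\qquad \partial_r\hook\beta_j=r\alpha_j,
\]
with $\sigma_j$ a scale-invariant tangential $2$-form, the equation $d\beta_j=0$ — separated into its $dr$-component and its purely tangential component — forces, in every dimension,
\[
  d\alpha_j=2\sigma_j,\qquad d\sigma_j=0 .
\]
Consequently $d\omega=f'\,dr\wedge\sum_j\alpha_j\otimes e_j+2f\sum_j\sigma_j\otimes e_j$.

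The only remaining term is $\tfrac12[\omega\wedge\omega]=\tfrac12 f^2\sum_{i,l}(\alpha_i\wedge\alpha_l)\otimes[e_i,e_l]$, and the crux of the argument is the identity
\[
  \tfrac12\sum_{i,l}(\alpha_i\wedge\alpha_l)\otimes[e_i,e_l]=q\sum_j\sigma_j\otimes e_j,
\]
valid for a constant $q$ fixed by the Killing normalisation of $\{e_j\}$, with no $\g^\perp$-valued part and no tangential $2$-forms beyond the $\sigma_j$. Granting it and substituting $dr\wedge\alpha_j=\tfrac1r\beta_j-r\sigma_j$ collects everything as
\[
  \Omega=\frac{f'}{r}\sum_j\beta_j\otimes e_j+\bigl(2f+qf^2-rf'\bigr)\sum_j\sigma_j\otimes e_j .
\]
Since the $\beta_j$ span $\g$, whereas a short Hodge-star computation shows that $\sum_j\sigma_j\otimes e_j$ carries a nonzero $\g^\perp$-component, the $2$-form part of $\Omega$ lies in $\g$ \emph{exactly} when the bracketed coefficient vanishes, i.e. when
\[
  rf'=2f+qf^2 .
\]

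I would then linearise this Riccati equation via $g=1/f$, turning it into $rg'+2g=-q$ with general solution $g=-\tfrac{q}{2}+Dr^{-2}$, whence
\[
  f(r)=\frac{r^2}{-\tfrac{q}{2}r^2+D}=c_n\,\frac{r^2}{r^2+C},\qquad c_n=-\frac{2}{q},\quad C=-\frac{2D}{q}.
\]
Choosing the integration constant so that $C>0$ selects the branch with $f\sim r^2$ near the origin — so that $\omega$ extends smoothly across $0$ — and tending to the pure-gauge value at infinity. Equivalently $a(r)=f(r)/r=c_n\,r/(r^2+C)$, as claimed, and $\Omega=\tfrac{f'}{r}\sum_j\beta_j\otimes e_j$ is then manifestly $\g$-valued, so $\omega$ is a $\G$-instanton.

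The main obstacle is the displayed bracket identity for $\G=\G_2$ and $\Spin(7)$. For $\SU(2)$ it is immediate from $[e_i,e_j]=2\epsilon_{ijk}e_k$ and $\alpha_i\wedge\alpha_j=-\epsilon_{ijk}\sigma_k$ on $\sS^3$, giving $q=-2$ and hence $c_4=1$. In the exceptional cases the same identity encodes nontrivial contraction relations of the associative $3$-form $\varphi$ and its dual $\Hodge\varphi$ (for $\G_2\subset\SO(7)$) and of the Cayley $4$-form $\Phi$ (for $\Spin(7)\subset\SO(8)$): one must verify both that the quadratic expression in the $\alpha_i$ reproduces the $\sigma_j$ and, crucially, that no $\g^\perp$-valued $2$-form survives. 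I would establish this either by a direct computation in an explicit adapted coframe, or more invariantly by recasting the instanton condition as the vanishing of the component of $\Omega$ in the complementary summand $\Lambda^2_7$ of $\Lambda^2=\g\oplus\g^\perp$, characterised through wedging with $\varphi$, $\Hodge\varphi$ or $\Phi$ together with the standard contraction identities for these forms. This is precisely the step at which the special algebra of $\SU(2)$, $\G_2$ and $\Spin(7)$ is used, and — as anticipated in the introduction — the point at which the construction breaks down for the remaining Ricci-flat holonomy groups.
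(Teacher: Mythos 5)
Your reduction is set up correctly and is genuinely more unified than the paper's argument: the polar decomposition \( \beta_j=r\,dr\wedge\alpha_j+r^2\sigma_j \), the consequences \( d\alpha_j=2\sigma_j \), \( d\sigma_j=0 \) of \( d\beta_j=0 \), the resulting expression for \( \Omega \), the smooth extension across the origin for \( C>0 \), and the linearisation of the Riccati equation \( rf'=2f+qf^2 \) are all correct. Indeed, with the paper's normalisations the values \( q=2,\tfrac16,3 \) reproduce exactly the constants \( c_4=-1 \), \( c_7=-12 \), \( c_8=-\tfrac23 \) of \cref{prop:basicinstanton-R4,prop:instanton-G2,prop:instanton-Spin7}. (Your signs \( q=-2 \), \( c_4=+1 \) reflect your own conventions for the brackets and for the identification \( \so(n)\cong\Lambda^2(\bR^n)^* \); the sign of \( c_n \) is meaningful once conventions are fixed, so it must be tracked consistently, but this is a bookkeeping issue.) The genuine gap is that everything hinges on the displayed bracket identity
\[
\tfrac12\sum_{i,l}(\alpha_i\wedge\alpha_l)\otimes[e_i,e_l]=q\sum_j\sigma_j\otimes e_j ,
\]
and for \( \G=\G_2 \) and \( \Spin(7) \) you do not prove it: you only outline two strategies for doing so. This identity is not a routine step that can be deferred — it is the entire analytic content of the theorem. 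It is exactly the point that fails for \( \SU(n) \), \( n>2 \), and \( \Sp(n) \), \( n>1 \) (cf.\ \cref{rem:basicansatz}), and it cannot be obtained from equivariance alone: at a point \( p\in\sS^6 \) the stabiliser in \( \G_2 \) is \( \SU(3) \), and the space of \( \SU(3) \)-equivariant maps \( \Lambda^2(T_p\sS^6)^*\to\g_2 \) is \( 3 \)-dimensional (one copy from \( \su(3)\to\su(3) \), a complex line from \( [\![\Lambda^{2,0}]\!]\to[\![\Lambda^{1,0}]\!] \)), so invariance does not force the bracket term to be proportional to \( \sum_j\sigma_j\otimes e_j \). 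The paper's proof consists precisely of the verification you defer: it fixes explicit bases \( \{e_j\} \), \( \{\beta_j\} \) of \( \g_2\subset\so(7) \) and \( \lie{spin}(7)\subset\so(8) \), evaluates at a convenient point of the sphere, and checks all \( 14 \) (respectively \( 21 \)) components; that computation is how the constants \( \tfrac16 \) and \( 3 \) — your \( q \) — actually get determined.

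Two further points. First, the identity you state is stronger than needed: since any \( \g \)-valued tangential discrepancy is harmless, it suffices that the \( \g^\perp \)-components of the bracket term be a \emph{common} multiple of the \( \g^\perp \)-components of the \( \sigma_j \); this weaker statement is what the paper's component-by-component check establishes, and if only it holds your final formula \( \Omega=\tfrac{f'}{r}\sum_j\beta_j\otimes e_j \) would acquire additional (but still \( \g \)-valued) tangential terms without affecting the conclusion. Second, your \( n=4 \) verification from the \( \su(2) \) relations is fine, so the proposal does constitute a proof for \( \SU(2) \); but for \( n=7,8 \) it is a correct reduction to an unproven algebraic identity, not a proof of the theorem.
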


As observed in \cref{prop:YMrInst4d}, in the \( 4 \)-dimensional case, these instanton solutions are the only globally defined Yang-Mills connections satisfying our rotationally symmetric ansatz  \eqref{eq:conn-ansatz}.

\begin{remark}
\label{rem:basicansatz}
Given the result above, it is  natural to ask what happens for   the other Ricci-flat geometries, associated with groups \( \G=\SU(n) \), \( n>2 \), on \( \bR^{2n}=\bC^n \) and \( \Sp(n) \), \( n>1 \), on \( \bR^{4n}=\bH^n \).   
However, direct computations show that the ansatz \eqref{eq:conn-ansatz} does not produce \( \G \)-instantons in these situations.  This is perhaps another manifestation of the ``exceptional'' nature of the holonomy groups \( \G_2 \) and \( \Spin(7)\), as well as the special case of 4 dimensions where the group is \( \SU(2)=\Sp(1) \).
\end{remark}

We also discuss another construction of instantons on flat space which is related to Nahm's equations and  quaternionic structures. For any solution of Nahm's equations, we obtain families of \( \Sp(n) \)-instantons on \( \bH^n \), as explained by \cref{prop:NahmHn}.

\begin{remark}
Throughout the article we will use \( (x_1,\ldots,x_n) \) for coordinates on \( \bR^n \) and let \( dx^{ij\ldots k} \) denote \( d x^i\wedge dx^j\wedge\ldots\wedge dx^k \). We also let \( E_{k\ell} \) be 
the elementary \( n\times n \) matrix with \( 1 \)
in the \((k,\ell) \) entry 
and zero for all other entries.
\end{remark}

\noindent{\bf Acknowledgements.}  JDL is partially supported by the Simons Collaboration on Special Holonomy in Geometry, Analysis, and Physics (\( \#724071 \) Jason Lotay).

\section{Basic instantons}

We will now show explicitly how the ansatz \eqref{eq:conn-ansatz} recovers the basic instanton on \( \bR^4 \) (with the flat metric) and leads to generalisations 
on \( \bR^7 \) (with a flat \( \G_2 \)-structure) and \( \bR^8 \) (with a flat \( \Spin(7) \)-structure).

\subsection{On \( \bR^4 \)}
We first consider the construction of the basic instanton
on the trivial bundle \( \SU(2)\times \bR^{4} \)
and so need the standard \( \SU(2) \)-structure on \( \bR^{4}\cong\bC^2 \).
This structure can be specified in terms of a non-degenerate \( 2 \)-form
\( \sigma \) and a \( (2,0) \)-form \( \Psi \) on \(\bR^4\) given by:
\begin{equation*}
  \begin{gathered}
    \sigma=dx^{12}+dx^{34}
    ,
    \quad
    \Psi=\psi+i\hat\psi    =(dx^1+idx^2)\wedge(dx^3+idx^4).
  \end{gathered}
\end{equation*}
With a view to later constructions, we can see this in terms of structures on
\( \bR^4\setminus\{0\}\cong\bR_+\times \sS^3 \), we have
\begin{equation}\label{eq:s3.decomp}
  \begin{gathered}
    \sigma=dr\wedge r\eta+\frac{r^2}{2} d\eta,\quad \Psi=(dr+ir\alpha)\wedge
    r\Phi,
  \end{gathered}
\end{equation}
for a (contact) \(1\)-form \(\eta\)  
and a \((1,0)\)-form \(\Phi\) on \(\sS^3\).
For later use, it is convenient to introduce the notation
\( \Phi=\phi+i\hat\phi \)
for the decomposition of the \( (1,0) \)-form
\( \Phi \) into real and imaginary parts.

The condition for a \( 2 \)-form
\( \beta \)
to be in \( \su(2)\subset\Lambda^2(\bR^4)^* \) can be expressed as
\begin{equation}
  \label{eq:sun-cond}
  \Psi\wedge\beta=0\quad\text{and}\quad \sigma\wedge\beta=0.
\end{equation}

\begin{remark}\label{rmk:sd} It is elementary to check that the condition \eqref{eq:sun-cond} on \( \beta \) is equivalent to demanding that \( \beta\) is anti-self-dual with respect to the standard orientation on \(\bR^4 \).  The first equality in \eqref{eq:sun-cond} says that \( \beta \)
is of type \( (1,1) \)
for the standard complex structure on \( \bR^4\cong\bC^2 \), and we note that, as \( \beta \)
is real, it suffices to check that \( \beta\wedge\psi=0 \).
The second equality in \eqref{eq:sun-cond} implies that \( \beta \)
is ``trace-free'' and so orthogonal to \( \sigma \)
inside \( \Lambda^2(\bR^4)^* \).
\end{remark}

In order to proceed, let us fix a basis of \( \su(2)\leqslant\so(4) \). 
Recalling the elementary matrices \( E_{k\ell} \), an orthonormal basis, up to
scale, of \( \su(2) \) is given by the elements:
\begin{equation*}
  \begin{gathered}
  e_1=\frac{1}{2}(-E_{12}+E_{21}+E_{34}-E_{43}),\quad
  e_2=\frac{1}{2}(-E_{13}+E_{31}+E_{42}-E_{24}),\\
  e_3=\frac{1}{2}(-E_{14}+E_{41}+E_{23}-E_{32}).
  \end{gathered}
\end{equation*}
This basis satisfies the usual commutation relations \( [e_i,e_j]=-2e_k
\) for cyclic permutations \( (i,j,k) \) of \( (1,2,3) \).
Then \( \su(2)\subset\Lambda^2(\bR^4)^* \) is spanned by the corresponding \( 2 \)-forms
\begin{equation*}
  \begin{gathered}
  \beta_1=dx^{12}-dx^{34},\quad \beta_2=dx^{13}-dx^{42},\quad\beta_3=dx^{14}-dx^{23}.
  \end{gathered}
\end{equation*}

Now let \( \omega \)
be a connection given by~\eqref{eq:conn-ansatz} with curvature \( \Omega \).
It follows by
\cref{def:instanton-def} and \eqref{eq:sun-cond} that the
\( \SU(2) \)-instanton
condition is given by 
\begin{equation*}
 \Omega \wedge \sigma=0 \quad\text{
and} \quad \Omega\wedge \psi=0 .
\end{equation*}
Equivalently, viewing \( \omega \) as family of connnections \( A(r) \)
over \( \sS^3 \) and recalling \eqref{eq:s3.decomp}, we can express the instanton condition as
\begin{equation}\label{eq:Sun-instanton-eq2}
\begin{gathered}
  rA'\wedge d\eta=-2F_A\wedge\eta,\quad F_A\wedge d\eta=0,\\
    rA'\wedge\alpha\wedge \hat\phi=F_A\wedge\phi,\quad
    F_A\wedge\hat\phi\wedge\alpha=0,
  \end{gathered}
\end{equation}
where \( F_A \) is the curvature of \( A \).

Given the symmetry of the problem, for computational simplicity we can consider the point
\( (1,0,0,0)\) in \(\sS^3 \)
so that we may identify the unit radial vector field 
\( \partial_r \)
with \( \partial_1=\partial/\partial x_1 \).  We then have that
\( \alpha_j=dx^{j+1} \) and
\begin{equation*}
  \begin{gathered}
    \beta_1|_{\sS^3}=-dx^{34}
    ,\quad
    \beta_2|_{\sS^3}=-dx^{42}
    ,\quad
    \beta_3|_{\sS^3}= -dx^{23}
    .
  \end{gathered}
\end{equation*}
Now at \( (1,0,0,0)\in \sS^3 \) the \( 2 \)-form \( F_A \) is given by:
\begin{equation*}
F_A=2(ra(r)(1+ra(r))\sum_{j=1}^3\beta_j|_{\mathcal{S}^3}\otimes e_j.
\end{equation*}

It is now easy to see that the  instanton condition as expressed in \eqref{eq:Sun-instanton-eq2} reduces
to a single ODE:
\begin{equation}
  \label{eq:SU2-inst}
  \frac{da}{dr}=\frac{a}{r}(1+2ar).
\end{equation}
Solving \eqref{eq:SU2-inst}, we recover the basic instanton on
\( \bR^4 \). In summary:

\begin{proposition}
  \label{prop:basicinstanton-R4}
  The basic instanton on \( \bR^4 \) arises from the ansatz \eqref{eq:conn-ansatz} with \(\G=\SU(2)\) and
  \begin{equation*}
    a(r)=-\frac{r}{r^2+C},
  \end{equation*}
  where \( C \) is a positive constant.
  \qed
\end{proposition}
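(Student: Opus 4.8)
The plan is to solve the single ODE \eqref{eq:SU2-inst} to which the instanton condition has already been reduced, and then to single out the solution giving a globally defined connection on all of \( \bR^4 \). First I would rewrite \eqref{eq:SU2-inst} as
\[
  \frac{da}{dr}-\frac{a}{r}=2a^2 ,
\]
which is a Bernoulli equation of degree two. The substitution \( u=1/a \) linearises it: since \( u'=-a'/a^2 \), dividing through by \( a^2 \) and substituting yields the first-order linear equation
\[
  u'+\frac{u}{r}=-2 .
\]

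Next I would integrate this linear equation with the integrating factor \( r \), turning it into \( (ru)'=-2r \), so that \( ru=-r^2+C' \) for an integration constant \( C' \), and hence
\[
  a(r)=\frac{r}{C'-r^2}.
\]
Setting \( C'=-C \) recovers the stated form \( a(r)=-r/(r^2+C) \), and a direct substitution back into \eqref{eq:SU2-inst} confirms it is indeed a solution. The corresponding coefficient in the ansatz \eqref{eq:conn-ansatz} is then \( ra(r)=-r^2/(r^2+C) \).

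To fix the sign of the constant I would impose that \eqref{eq:conn-ansatz} extend from \( \bR^4\setminus\{0\} \) to a smooth connection on all of \( \bR^4 \). The coefficient \( ra(r)=-r^2/(r^2+C) \) is regular at the origin for every value of \( C \), but it develops a pole at \( r=\sqrt{-C} \) unless \( C\geq 0 \); taking \( C>0 \) gives the nondegenerate one-parameter family of globally defined solutions, with \( C \) controlling the scale, whereas \( C=0 \) degenerates to the constant coefficient \( -1 \), for which \( 1+ra=0 \) forces the curvature to vanish. This isolates \( C>0 \), as claimed.

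The one genuinely substantive point — the ODE itself being routine — is confirming that the resulting connection is \emph{the} basic instanton rather than merely a solution of the reduced equation. Here I would note that \( ra(r)=-r^2/(r^2+C) \) interpolates between \( 0 \) at the origin and \( -1 \) at infinity, so the connection is trivial at \( r=0 \) and asymptotically pure gauge, which is the signature of the charge-one BPST solution. Matching the explicit expression \( -\tfrac{r^2}{r^2+C}\sum_j\alpha_j\otimes e_j \) against the standard quaternionic form \( \im(\bar x\,dx)/(|x|^2+C) \) of the BPST instanton then completes the identification, and I expect the bulk of the remaining (bookkeeping) work to sit in this comparison of conventions.
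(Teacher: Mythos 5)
Your proposal is correct and follows essentially the same route as the paper: the paper reduces the instanton condition for the ansatz \eqref{eq:conn-ansatz} to the single ODE \eqref{eq:SU2-inst} and then simply asserts that solving it recovers the basic instanton, which is exactly what you carry out. Your Bernoulli substitution \( u=1/a \), the elimination of \( C\leqslant 0 \) (pole at \( r=\sqrt{-C} \) for \( C<0 \), flat connection since \( 1+ra\equiv 0 \) for \( C=0 \)), and the matching with the standard BPST form \( \im(\bar x\,dx)/(|x|^2+C) \) are details the paper leaves implicit.
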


\begin{remark}
It follows from the above computations that 
\( \int_{\bR^{4}}\|\Omega\|^2{\rm{vol}}_{\bR^{4}} \)
is finite. Note that the \(\SU(2)\)-instanton condition is conformally invariant since it is the anti-self-duality condition by Remark \ref{rmk:sd}.  Since the \( L^2 \)-norm of \( \Omega \) is also conformally invariant, it follows that stereographic projection will take the basic instanton on \( \bR^4 \) and give an instanton on \( \sS^{4} \)
with a removable point singularity. 
\end{remark}

It seems natural to also ask, more generally, what are the possible solution to 
the Yang--Mills equation, arising from our connection ansatz \eqref{eq:conn-ansatz} in this setting; that is, connections which are critical points for the Yang--Mills functional.   
Elementary computations show that the Yang--Mills equations are given by the second
order non-linear ODE:
\begin{equation}
\label{eq:YM4d}
r^2\frac{d^2a}{dr^2}+3r\frac{da}{dr}-4ra^2(3+2ra)-3a=0.
\end{equation}
To better understand this second order ODE, it is useful make the change of variables: 
\begin{equation*}
u(r)=\frac{a(r)}{r}.
\end{equation*}
Note that, in terms of \( u \), the instanton condition \eqref{eq:SU2-inst} reads:
\begin{equation*}
\frac{du}{dr}=2ru^2.
\end{equation*}
Motivated by this, let: 
\begin{equation*}
v(r)=r^5\left(\frac{du}{dr}-2ru^2\right).
\end{equation*}
Using these variables, \( (u,v) \), the Yang-Mills condition \eqref{eq:YM4d} becomes equivalent to the following system of first order ODEs:
\begin{equation}
\label{eq:YM1stODEsys}
\frac{du}{dr}=2ru^2+\frac{v}{r^5}\quad\textrm{and}\quad
\frac{dv}{dr}=-4ruv.
\end{equation}

Now, it is not difficult to see that the instanton solutions of \cref{prop:basicinstanton-R4} exhaust all globally defined Yang-Mills solutions.

\begin{proposition}
\label{prop:YMrInst4d}
The only globally defined Yang-Mills solutions of \eqref{eq:YM4d} are the instanton solutions of \cref{prop:basicinstanton-R4}.
\end{proposition}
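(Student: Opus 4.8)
The plan is to pass to the equivalent first-order system \eqref{eq:YM1stODEsys} in the variables \( u=a/r \) and \( v=r^5(u'-2ru^2) \), and to show that any globally defined solution must have \( v\equiv 0 \). Since \( v\equiv 0 \) is precisely the instanton condition \( u'=2ru^2 \), this suffices. The starting observation is that the second equation \( v'=-4ruv \) is linear and homogeneous in \( v \); hence either \( v\equiv 0 \), or \( v \) never vanishes on \( (0,\infty) \). In the latter case \( v \) has a fixed sign and, for any fixed \( r_1>0 \),
\begin{equation*}
v(r)=v(r_1)\exp\left(-4\int_{r_1}^{r}s\,u(s)\,ds\right).
\end{equation*}

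First I would pin down what ``globally defined'' imposes near the origin. Writing the connection \eqref{eq:conn-ansatz} in ambient form, a direct computation gives \( A=u(r)\sum_j\tilde\alpha_j\otimes e_j \), where \( \tilde\alpha_j=X\hook\beta_j \) and \( X=\sum_i x_i\,\partial/\partial x_i \) is the radial (Euler) vector field; this uses \( \tilde\alpha_j=r^2\,\pi^*\alpha_j \) along the spheres, \( \pi \) being radial projection. The \( \tilde\alpha_j \) are smooth \( 1 \)-forms on \( \bR^4 \) with linear coefficients, so \( |\tilde\alpha_j|=O(r) \) as \( r\to 0 \). For \( A \) to extend over the origin as an everywhere defined (even just bounded) connection we therefore need \( u=O(r^{-1}) \) near \( 0 \); in particular \( s\,u(s) \) is integrable at \( 0 \). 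This geometric translation of global-definedness into a growth bound on \( u \) is the one genuine input, and I expect it to be the main obstacle: one must argue carefully that regularity of the connection on all of \( \bR^4 \), rather than merely on \( \bR^4\setminus\{0\} \) where the ansatz lives, forces this bound.

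Granting this, the remainder is a short contradiction. Suppose \( v\not\equiv 0 \). Since \( s\,u(s) \) is integrable at \( 0 \), the exponential formula above shows \( v(r)\to v_0 \) for some finite \( v_0\neq 0 \) as \( r\to 0 \). Substituting into the first equation of \eqref{eq:YM1stODEsys}, the term \( v/r^5\sim v_0/r^5 \) dominates and is non-integrable at \( 0 \), so integrating \( u'=2ru^2+v/r^5 \) yields \( u(r)\sim -v_0/(4r^4)\to\pm\infty \). This contradicts \( u=O(r^{-1}) \), and hence \( v\equiv 0 \). Note the argument is robust: any bound ruling out \( |u| \) growing faster than \( r^{-2} \) keeps \( s\,u \) integrable and leads to the same blow-up.

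Finally, with \( v\equiv 0 \) the system reduces to \( u'=2ru^2 \), whose solutions are \( u\equiv 0 \) and \( u=-1/(r^2+C) \). The trivial solution is the flat connection, while for \( u \) to be finite at the origin and free of poles on \( (0,\infty) \) we need \( C>0 \); this gives \( a=ru=-r/(r^2+C) \), which is exactly the family of \cref{prop:basicinstanton-R4}.
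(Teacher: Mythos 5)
Your proposal is correct, and its key step is genuinely different from the paper's. Both arguments share the reduction to the system \eqref{eq:YM1stODEsys} and the goal of forcing \( v\equiv 0 \), but they diverge in how ``globally defined'' is used and in how \( v\equiv 0 \) is derived. The paper reads global definedness at the level of the system itself: it assumes \( (u,v) \) is regular at \( r=0 \), so that the \( r^{-5} \) singularity forces \( v(0)=0 \); it then fixes a sign of \( u \) on a small interval \( [0,\varepsilon] \), shows \( v\equiv0 \) there (monotonicity of \( v^2 \) in one case, Gr\"onwall in the other), and propagates \( v\equiv 0 \) by uniqueness for the regular initial value problem. You instead translate global definedness geometrically --- the connection form \( u(r)\sum_j(X\hook\beta_j)\otimes e_j \) must stay bounded near the origin, giving only \( u=O(r^{-1}) \) --- and then exploit that the \( v \)-equation is linear and homogeneous: either \( v\equiv0 \) or \( v \) never vanishes, and in the latter case the exponential formula plus integrability of \( su(s) \) gives \( v\to v_0\neq0 \), whence \( u\sim -v_0/(4r^4) \), contradicting the bound. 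Each approach buys something: the paper's is shorter and purely ODE-theoretic, but its hypothesis is stronger and less geometric (regularity of \( u=a/r \) at \( 0 \) amounts to \( a(r)=O(r) \), whereas you only need \( a \) bounded), and its sign-fixing step is, strictly speaking, not automatic for a continuous \( u \) that oscillates in sign near \( 0 \) --- though this is easily repaired by applying Gr\"onwall to \( |v| \), exactly as your dichotomy sidesteps it. Conversely, the step you flag as ``the main obstacle'' is not one: the computation \( X\hook\beta_j=r^2\pi^*\alpha_j \) that you sketch is the complete argument, and the paper simply avoids the issue by defining global definedness at the level of \( (u,v) \) rather than of the connection. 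One shared loose end: the trivial solution \( u\equiv0 \) is a globally defined Yang--Mills (indeed flat, hence instanton) solution not literally of the form in \cref{prop:basicinstanton-R4}; both your write-up and the paper's treat it as a degenerate member of the family.
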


\begin{proof}
The statement follows if we can show that a globally defined solution \( (u,v) \) of \eqref{eq:YM1stODEsys} necessarily has \( v\equiv0 \). 
In this case, \( a(r)= ru(r) \) solves the instanton equation \eqref{eq:SU2-inst}. By uniqueness of soutions to (regular) initial value problems, \( v\equiv0 \) holds if we can show that \( v(r_0)=0 \) for some \( r_0>0 \).

To start with, we observe that regularity of the system \eqref{eq:YM1stODEsys}, at zero, forces \( v(0)=0 \). Next choose \( \varepsilon>0 \) so that either \( u(r)\geqslant0 \), or \( u(r)\leqslant0 \), for all \( [0,\varepsilon] \). 

In the case where \( u(r)\geqslant0 \), we see that for \( r\in[0,\varepsilon] \), \( v'(r) \) will either vanish or have the opposite sign of \( v(r) \).
Given that \( v(0)=0 \), this is only possible if \( v \) vanishes identically. So, in  this case, \( v(r_0)=0\) for some \( r_0>0 \), as required. 
 
Similarly, if  \( u(r)\leqslant0 \) on \( [0,\varepsilon] \), choose \( \delta\in\{-1,1\} \) so that
\( \delta v(r)\geqslant0 \) for small enough \( r \). Let \( C\geqslant 0 \) be a constant so that 
\( 0\leqslant -4ru\leqslant C \) on \( [0,\varepsilon] \). Then, for small \( r \), we have that:
\begin{equation*}
\delta v(r)=\delta\left( v(r)- v(0)\right)=\delta \int_0^r\frac{dv}{ds}ds=\int_0^r(-4su )\delta vds\leqslant C\Big\lvert\int_0^r\delta vds\Big\rvert.
\end{equation*}
Hence, by Gr{\"o}nwall's inequality \(\delta v\equiv0 \), so \( v\equiv0 \), for small enough values of \( r \). So again, we have that
\( v(r_0)=0 \) for some \( r_0>0 \).

In conclusion, we have shown that for any globally defined solution \( (u,v) \) of \eqref{eq:YM1stODEsys}, \( a(r)=ru(r) \) is a solution of the instanton condition \eqref{eq:SU2-inst}, as required.
\end{proof}

\subsection{On \( \bR^7 \)}

We now turn to the construction of what we shall call the basic instanton on the
trivial bundle \( \G_2\times\bR^7 \).
This requires a geometric description of the compact Lie group
\( \G_2 \) which may be given in terms of the \( 3 \)-form \( \varphi \) on \(\bR^7\) given by
\begin{equation}
  \label{eq:G2-3form}
  \varphi=dx^{123}+dx^1\wedge(dx^{45}+dx^{67})+dx^2\wedge(dx^{46}+dx^{75})-dx^3\wedge(dx^{47}+dx^{56}),
\end{equation}
as \( {\rm{Stab}}_{\GL(7,\bR)}(\varphi)=\G_2\subset\SO(7) \).
Since \( \varphi \)
determines the standard metric and orientation on \( \bR^7 \),
we can also consider its dual \( 4 \)-form:
\begin{equation}
  \label{eq:G2-4form}
  \Hodge\varphi=dx^{4567}+dx^{23}\wedge(dx^{45}+dx^{67})+dx^{31}\wedge(dx^{46}+dx^{75})-dx^{12}\wedge(dx^{47}+dx^{56}).
\end{equation}
Rephrasing in terms of the standard nearly K\"ahler structure
\( (\sigma,\Psi=\psi+i\hat{\psi}) \) on the unit sphere \( \sS^6\subset \bR^7 \), where \( \sigma \) is a \( 2 \)-form and \( \Psi \) is a \( (3,0) \)-form, we have
\begin{equation}\label{eq:s6}
  \varphi=dr\wedge r^2\sigma+r^3\psi,\quad \Hodge\varphi=r^3\hat\psi\wedge dr+\tfrac12 r^4\sigma^2.
\end{equation}
on \( \bR_+\times \sS^6 \).

It is well-known that the condition for a \( 2 \)-form
\( \beta \)
to be in \( \g_2\subset\Lambda^2(\bR^7)^* \)
can be expressed as either of the equivalent conditions
\begin{equation}\label{eq:g2}
  \Hodge(\varphi\wedge\beta)=-\beta
  \quad\textrm{or}\quad
  \Hodge\varphi\wedge\beta=0.
\end{equation}
The first condition manifestly has echoes of the anti-self-duality condition in 4 dimensions.

For an \( \G_2 \)-instanton of the form \eqref{eq:conn-ansatz}, as well as substituting \( \beta=\Omega \) in \eqref{eq:g2}, the instanton
condition can also be rephrased as:
\begin{equation}\label{eq:G2-evol}
  A'=\Hodge_r(F_A\wedge r^3\psi),
\end{equation}
subject to the initial constraint 
\begin{equation}\label{eq:G2-constraint}
F_A(r_0)\wedge\sigma(r_0)^2=0,
\end{equation}
see, e.g., \cite{Lotay-M:instantons}.  Here we are using the decomposition \eqref{eq:s6} and \( \Hodge_r \) denotes the Hodge star on the 6-sphere of radius \( r \).

As in the previous case, to make our construction explicit, we need
to establish an identification of \( \g_2\leqslant\so(7)\).
Using the elementary matrices \( E_{k\ell} \), a basis of \( \g_2 \)
is given as follows:
\begin{align*}
  e_1&=
  \tfrac14(E_{23}-E_{32}-E_{45}+E_{54}),
  & e_2&=
  -\tfrac1{4\sqrt3}(E_{23}-E_{32}+E_{45}-E_{54}
  -2E_{67}+2E_{76}),\\
  e_3&=\tfrac14(E_{13}-E_{31}+E_{46}-E_{64})
  ,& e_4&= 
  -\tfrac1{4\sqrt3}(E_{13}-E_{31}-E_{46}+E_{64}
  -2E_{57}+2E_{75})
  ,\\
  e_5&=
  -\tfrac14(E_{12}-E_{21}+E_{47}-E_{74})
  ,& e_6&=
  \tfrac1{4\sqrt3}(E_{12}-E_{21}-E_{47}+E_{74}
  +2E_{56}-2E_{65}),\\
  e_7&= 
  \tfrac14(E_{15}-E_{51}-E_{26}+E_{62})
  ,&
  e_8&=
  \tfrac1{4\sqrt3}(E_{15}-E_{51}+E_{26}-E_{62}
  +2E_{37}-2E_{73})
  ,
\\
  e_9&=
  -\tfrac14(E_{14}-E_{41}-E_{27}+E_{72}),&
  e_{10}&=
  -\tfrac1{4\sqrt3}(E_{14}-E_{41}+E_{27}-E_{72}
  -2E_{36}+2E_{63}),\\
  e_{11}&=
  \tfrac14(E_{17}-E_{71}+E_{24}-E_{42})
  ,&
  e_{12}&=
  \tfrac1{4\sqrt3}(E_{17}-E_{71}-E_{24}+E_{42}
  -2E_{35}+2E_{53})
  ,\\
  e_{13}&=
  -\tfrac14(E_{16}-E_{61}+E_{25}-E_{52}),&
  e_{14}&=
  -\tfrac1{4\sqrt3}(E_{16}-E_{61}-E_{25}+E_{52}
  +2E_{34}-2E_{43}).
\end{align*}
Correspondingly \( \g_2\subset\Lambda^2(\bR^7)^* \) has basis:
\begin{align*}
    \beta_1&=-\tfrac14(dx^{23}-dx^{45}),&
    \beta_2&=\tfrac1{4\sqrt3}(dx^{23}+dx^{45}-2dx^{67}),\\
    \beta_3&=-\tfrac14(dx^{13}+dx^{46}),&
    \beta_4&=\tfrac1{4\sqrt3}(dx^{13}-dx^{46}-2dx^{57}),\\
    \beta_5&=\tfrac14(dx^{12}+dx^{47}),&
    \beta_6&=-\tfrac1{4\sqrt3}(dx^{12}-dx^{47}+2dx^{56}),\displaybreak[0]\\
    \beta_7&=-\tfrac14(dx^{15}-dx^{26}),&
    \beta_8&=-\tfrac1{4\sqrt3}(dx^{15}+dx^{26}+2dx^{37}),\displaybreak[0]\\
    \beta_9&=\tfrac14(dx^{14}-dx^{27}),&
    \beta_{10}&=\tfrac1{4\sqrt3}(dx^{14}+dx^{27}-2dx^{36}),\displaybreak[0]\\
    \beta_{11}&=-\tfrac14(dx^{17}+dx^{24}),&
    \beta_{12}&=-\tfrac1{4\sqrt3}(dx^{17}-dx^{24}-2dx^{35}),\displaybreak[0]\\
  \beta_{13}&=\tfrac14(dx^{16}+dx^{25}),&
  \beta_{14}&=\tfrac1{4\sqrt3}(dx^{16}-dx^{25}+2dx^{34}).
\end{align*}

For simplification, we proceed by considering the point \( x\in\mathcal{S}^6 \) with \( x_1=1 \) and \( x_j=0 \) for all other \( j \). Then at \( x \) we have:
\begin{equation*}
\begin{gathered}
  \alpha_1=0,\quad \alpha_2=0,\quad
  \alpha_3=-\tfrac14dx^3,\quad
  \alpha_4=\tfrac1{4\sqrt3}dx^3,\quad
  \alpha_5=\tfrac14dx^2,\\
  \alpha_6=-\tfrac1{4\sqrt3}dx^2,\quad
  \alpha_7=-\tfrac14dx^5,\quad
  \alpha_8=-\tfrac1{4\sqrt3}dx^5,\quad
  \alpha_9=\tfrac14dx^4,\quad
  \alpha_{10}=\tfrac1{4\sqrt3}dx^4,\\
  \alpha_{11}=-\tfrac14dx^7,\quad
  \alpha_{12}=-\tfrac1{4\sqrt3}dx^7,\quad
  \alpha_{13}=\tfrac14dx^6,\quad
  \alpha_{14}=\tfrac1{4\sqrt3}dx^6.
  \end{gathered}
\end{equation*}
Similarly, at \( x \), we have:
\begin{equation*} \begin{aligned}
    \beta_1|_{\sS^6}&=-\tfrac14(dx^{23}-dx^{45}),&
    \beta_2|_{\sS^6}&=\tfrac1{4\sqrt3}(dx^{23}+dx^{45}-2dx^{67}),\\
    \beta_3|_{\sS^6}&=-\tfrac14dx^{46},&
    \beta_4|_{\sS^6}&=\tfrac1{4\sqrt3}(-dx^{46}-2dx^{57}),\\
    \beta_5|_{\sS^6}&=\tfrac14dx^{47},&
    \beta_6|_{\sS^6}&=\tfrac1{4\sqrt3}(dx^{47}-2dx^{56}),\\
    \beta_7|_{\sS^6}&=\tfrac14dx^{26},&
    \beta_8|_{\sS^6}&=-\tfrac1{4\sqrt3}(dx^{26}+2dx^{37}),\\
    \beta_9|_{\sS^6}&=-\tfrac14dx^{27},&
    \beta_{10}|_{\sS^6}&=\tfrac1{4\sqrt3}(dx^{27}-2dx^{36}),\\
    \beta_{11}|_{\sS^6}&=-\tfrac14dx^{24},&
    \beta_{12}|_{\sS^6}&=\tfrac1{4\sqrt3}(dx^{24}+2dx^{35}),\\
    \beta_{13}|_{\sS^6}&=\tfrac14dx^{25},&
    \beta_{14}|_{\sS^6}&=\tfrac1{4\sqrt3}(-dx^{25}+2dx^{34}).  \end{aligned} \end{equation*}
    
Using these explicit formulae, we consider the evolution equation \eqref{eq:G2-evol} subject to the initial constraint \eqref{eq:G2-constraint} for the ansatz \eqref{eq:conn-ansatz}.  
If we look at the \( e_5 \) component of \( A' \), \( \Hodge_r(dA\wedge r^4\psi) \) and \( \Hodge_r(\tfrac12[A,A]\wedge r^4\psi) \) at \( x \)  we see that they are
\begin{equation*}
  \tfrac14\frac{d(ar)}{d r}dx^2,\quad \tfrac142adx^2,\quad \tfrac14\tfrac16a^2rdx^2,
\end{equation*}
respectively. This suggests that the evolution equation \eqref{eq:G2-evol}  is equivalent to the ODE:
\begin{equation}\label{eq:g2-ode}
  \frac{da}{d r}=\frac{a}{r}(1+\tfrac16ar).
\end{equation}
Performing similar computations for the remaining \( 13 \) components allow us to verify that this is indeed the case.  
It is also straightforward to check that the constraint 
\eqref{eq:G2-constraint} holds automatically at \( x \), the \( \G_2 \)-instanton condition for the ansatz \eqref{eq:conn-ansatz} is equivalent to solving \eqref{eq:g2-ode}.

In conclusion, we have shown the following.
\begin{proposition}
  \label{prop:instanton-G2}
  The ansatz \eqref{eq:conn-ansatz} on \( \bR^7 \) with \(\G=\G_2\) yields a non-trivial \( \G_2 \)-instanton provided
  \begin{equation*}
    a(r)=-12\frac{r}{r^2+C},
  \end{equation*}
  where \( C \) is a positive constant.  This is the basic \( \G_2 \)-instanton on \( \bR^7 \).  \qed
\end{proposition}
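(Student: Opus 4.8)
The plan is to treat as established --- from the component calculations carried out immediately before the statement --- that for the ansatz \eqref{eq:conn-ansatz} with \( \G = \G_2 \) the instanton condition \eqref{eq:G2-evol} collapses to the single scalar ODE \eqref{eq:g2-ode}, \( a' = \tfrac{a}{r}(1 + \tfrac16 ar) \), and that the constraint \eqref{eq:G2-constraint} is automatically satisfied at the chosen point \( x \in \sS^6 \). What then remains is purely analytic: integrate \eqref{eq:g2-ode} and select the integration constant for which the resulting connection is globally defined and non-trivial.

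To integrate \eqref{eq:g2-ode} I would reuse the substitution from the four-dimensional case and set \( u = a/r \). Then \( a = ru \) gives \( a' = u + ru' \), while the right-hand side becomes \( u(1 + \tfrac16 r^2 u) = u + \tfrac16 r^2 u^2 \); the terms linear in \( u \) cancel, leaving the separable equation \( u' = \tfrac16 r u^2 \). Separating variables, \( \int u^{-2}\,du = \tfrac16 \int r\,dr \), gives \( -u^{-1} = \tfrac1{12}(r^2 + C) \) for a constant \( C \), so that \( u = -12/(r^2+C) \) and \( a(r) = ru = -12\,r/(r^2+C) \), which is precisely the claimed formula.

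It then remains to confirm that this \( a \) yields an instanton defined on all of \( \bR^7 \). The coefficient multiplying \( \sum_j \alpha_j \otimes e_j \) in the connection is \( ra(r) = -12\,r^2/(r^2+C) \); exactly as for the basic instanton on \( \bR^4 \), smooth extension across the origin requires this coefficient to vanish there, which happens if and only if \( C > 0 \). For \( C = 0 \) the coefficient is the nonzero constant \( -12 \), and for \( C < 0 \) it acquires a pole at \( r = \sqrt{-C} \); in either case the connection fails to extend globally. Non-triviality is immediate, since \( a \not\equiv 0 \) for finite \( C \), while the only other solution of \( u' = \tfrac16 r u^2 \) is the flat connection \( u \equiv 0 \).

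The genuinely demanding part is the step I am assuming: verifying that all fourteen components of \eqref{eq:G2-evol} --- and not merely the \( e_5 \) component displayed --- reduce to the one ODE \eqref{eq:g2-ode} with the same factor \( \tfrac16 \), and that \eqref{eq:G2-constraint} does hold. Once that algebraic reduction is in hand, the rest is a one-line integration and the elementary regularity check above, the only remaining care being the normalisation \( -12 \) and the sign condition \( C > 0 \).
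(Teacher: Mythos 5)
Your proposal is correct and follows essentially the same route as the paper: the paper's own proof of this proposition consists precisely of the fourteen-component computation reducing the instanton condition \eqref{eq:G2-evol}, \eqref{eq:G2-constraint} to the ODE \eqref{eq:g2-ode} (the step you assume), after which the stated \( a(r) \) is simply the general non-trivial solution. Your explicit integration via \( u=a/r \), and the check that \( C>0 \) is what makes the connection extend over the origin, correctly supply the two steps the paper leaves implicit.
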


The instantons on \( \G_2\times\bR^7 \) arising from Proposition \ref{prop:instanton-G2}
are well-known to physicists, arising in the context of
heterotic string theory (see, for example,
\cite{Nicolai-G:instanton,Ivanova-P:instanton}). In these papers, a more general ansatz is used, allowing for other gauge groups, but the only explicit solution
provided is the one given above.
More recently \cite{Driscoll:G2inst}, it has been shown that this basic instanton
is locally unique, but it is not known whether it is globally unique.

\begin{remark}
From the equations \eqref{eq:g2}, one sees that the \( \G_2 \)-instanton condition is conformally invariant, in the sense that the space of \( \G_2 \)-instantons are the same for all \( \G_2\)-structures defined by 3-forms which differ by multiplication by a positive function.  Therefore, it is natural to ask whether one can use stereographic projection to take the basic \( \G_2 \)-instanton and define a \( \G_2\)-instanton on \( \sS^7\) (perhaps away from the North pole) for its standard (or some other) \( \G_2\)-structure.  However, one finds that the \( G_2\)-structure one obtains away from the North pole on \( \sS^7 \) is not a multiple of the standard \( \G_2 \)-structure on \( \sS^7\), and in fact cannot be extended to the North pole as a \( \G_2 \)-structure.  Hence, the basic instanton on \( \bR^7 \) cannot be lifted in any obvious way to \( \sS^7 \), unlike what occurs for the analogous setting of \( \bR^4 \) and \( \sS^4 \).
\end{remark}

\subsection{On \( \bR^8 \)}
Finally, we come to the basic instanton on \( \bR^8 \). We may regard \( \Spin(7) \)
as the \( \GL(8,\bR) \)-stabiliser of the closed \( 4 \)-form
\begin{equation*}
  \Phi=\varphi\wedge dx^8+\Hodge_7\varphi,
\end{equation*}
where \( \varphi \)
is given by \eqref{eq:G2-3form} and \( \Hodge_7\varphi \)
by \eqref{eq:G2-4form}.

Again, we wish to write this as a conical structure on the open set
\( \bR^8\setminus\{0\}\cong \bR_+\times \sS^7 \):
\begin{equation}\label{eq:s7}
  \Phi=r^3\varphi\wedge dr+r^4\Hodge_7\varphi
\end{equation}
where \( \varphi \)
is a \( \G_2 \)-structure
on \( \sS^7 \)
satisfying \( d\varphi=-4\Hodge_7\varphi \),
in accordance with the torsion-free condition.

It is well-known that the condition for a \( 2 \)-form
\( \beta \)
to lie in \( \lie{spin}(7)\subset\Lambda^2(\bR^8)^* \) is given by:
\begin{equation}\label{eq:spin7}
  \Hodge(\Phi\wedge\beta)=-\beta.
\end{equation}	
Again, the link to the idea of anti-self-duality is clear from this condition.

For our connection ansatz \eqref{eq:conn-ansatz} we could again substitute \( \beta=\Omega \) in \eqref{eq:spin7} to obtain the \( \Spin(7) \)-instanton condition.  However, we may also rephrase the \( \Spin(7) \)-instanton
condition for the family \( A(r)\) of connections over \( \sS^7 \) in \eqref{eq:conn-ansatz} as 
\begin{equation}\label{eq:spin7-evol}
  A'=\Hodge_r(F_A\wedge r^4\Hodge_7\varphi)
\end{equation}
for details see \cite{Lotay-M:instantons}.  Here we used the decomposition \eqref{eq:s7} and \( \Hodge_r \) is the Hodge star on the 7-sphere of radius \( r\).

In order to obtain a basis of \( \lie{spin}(7)\subset\so(8)\), we proceed as follows. Using the elementary matrices \( E_{k\ell}\), a choice of orthogonal basis of
\( \lie{spin}(7)\subset\Lambda^2(\bR^8)^* \) is:
\begin{gather*}
    e_1=-E_{12}+E_{21}-E_{47}+E_{74},\quad e_2=\tfrac1{\sqrt3}(-E_{12}+E_{21}+E_{47}-E_{74}-2E_{56}+2E_{65}),\\
    e_3=\tfrac1{\sqrt6}(E_{12}-E_{21}-3E_{38}+3E_{83}-E_{47}+E_{74}-E_{56}+E_{56}),\displaybreak[0]\\
    e_4=-E_{13}+E_{31}-E_{46}+E_{64},\quad e_5=\tfrac1{\sqrt3}(E_{13}-E_{31}-E_{46}+E_{64}-2E_{57}+2E_{75}),\\
    e_6=\tfrac1{\sqrt6}(-E_{13}+E_{31}-3E_{28}+3E_{82}+E_{46}-E_{64}-E_{57}+E_{75}),\displaybreak[0]\\
    e_7=E_{14}-E_{41}-E_{27}+E_{72},\quad e_8=\tfrac1{\sqrt3}(E_{14}-E_{41}+E_{27}-E_{72}-2E_{58}+2E_{85}),\\
    e_9=\tfrac1{\sqrt6}(E_{14}-E_{41}+E_{27}-E_{72}-3E_{36}+3E_{63}+E_{58}-E_{85}),\displaybreak[0]\\
    e_{10}=-E_{15}+E_{51}-E_{48}+E_{84},\quad
    e_{11}=\tfrac1{\sqrt3}(E_{15}-E_{51}-2E_{26}+2E_{62}-E_{48}+E_{84}),\\
    e_{12}=\tfrac1{\sqrt6}(-E_{15}+E_{51}-E_{26}+E_{62}-3E_{37}+3E_{73}+E_{48}-E_{84}),\displaybreak[0]\\
    e_{13}=E_{16}-E_{61}-E_{78}+E_{87},\quad
    e_{14}=\tfrac1{\sqrt3}(-E_{16}+E_{61}-2E_{25}+2E_{52}-E_{78}+E_{88}),\\
    e_{15}=\tfrac1{\sqrt6}(-E_{16}+E_{61}+E_{25}-E_{52}-3E_{34}+3E_{43}-E_{78}+E_{87}),\displaybreak[0]\\
    e_{16}=-E_{17}+E_{71}-E_{68}+E_{86},\quad
    e_{17}=\tfrac1{\sqrt3}(-E_{17}+E_{71}-2E_{24}+2E_{42}+E_{68}-E_{86}),\\
    e_{18}=\tfrac1{\sqrt6}(E_{17}-E_{71}-E_{24}+E_{42}-3E_{35}+3E_{53}-E_{68}+E_{86}),\displaybreak[0]\\
    e_{19}=E_{18}-E_{81}-E_{23}+E_{32},\quad
    e_{20}=\tfrac1{\sqrt3}(E_{18}-E_{81}-2E_{45}+2E_{54}+E_{23}-E_{32}),\\
    e_{21}=\tfrac1{\sqrt6}(E_{18}-E_{81}+E_{23}-E_{32}+E_{45}-E_{54}-3E_{67}+3E_{76}).
\end{gather*}
The corresponding basis of \( 2 \)-forms is:
\begin{gather*}
    \beta_1=dx^{12}+dx^{47},\quad \beta_2=\tfrac1{\sqrt3}(dx^{12}-dx^{47}+2dx^{56}),\\
    \beta_3=\tfrac1{\sqrt6}(-dx^{12}+3dx^{38}+dx^{47}+dx^{56}),\displaybreak[0]\\
    \beta_4=dx^{13}+dx^{46},\quad\beta_5=\tfrac1{\sqrt3}(-dx^{13}+dx^{46}+2dx^{57}),\\
    \beta_6=\tfrac1{\sqrt6}(dx^{13}+3dx^{28}-dx^{46}+dx^{57})
\displaybreak[0]\\
    \beta_7=-dx^{14}+dx^{27},\quad\beta_8=-\tfrac1{\sqrt3}(dx^{14}+dx^{27}-2dx^{58}),\\
    \beta_9=\tfrac1{\sqrt6}(-dx^{14}-dx^{27}+3dx^{36}-dx^{58}),
\displaybreak[0]\\
    \beta_{10}=dx^{15}+dx^{48},\quad
    \beta_{11}=\tfrac1{\sqrt3}(-dx^{15}+2dx^{26}+dx^{48}),\\
    \beta_{12}=\tfrac1{\sqrt6}(dx^{15}+dx^{26}+3dx^{37}-dx^{48}),
\displaybreak[0]\\
    \beta_{13}=-dx^{16}+dx^{78},\quad
    \beta_{14}=\tfrac1{\sqrt3}(dx^{16}+2dx^{25}+dx^{78}),\\
    \beta_{15}=\tfrac1{\sqrt6}(dx^{16}-dx^{25}+3dx^{34}+dx^{78}),
\displaybreak[0]\\
    \beta_{16}=dx^{17}+dx^{68},\quad
    \beta_{17}=\tfrac1{\sqrt3}(dx^{17}+2dx^{24}-dx^{68}),\\
    \beta_{18}=\tfrac1{\sqrt6}(-dx^{17}+dx^{24}+3dx^{35}+dx^{68}),
\displaybreak[0]\\
    \beta_{19}=-dx^{18}+dx^{23},\quad
    \beta_{20}=\tfrac1{\sqrt3}(-dx^{18}+2dx^{45}-dx^{23}),\\
    \beta_{21}=-\tfrac1{\sqrt6}(dx^{18}+dx^{23}+dx^{45}-3dx^{67}).
\end{gather*}

To illustrate computations, let us consider the point \( x\in\mathcal{S}^7 \) with \( x_8=1 \) and \( x_j=0 \) for all other \(j\).
 We then have that, at \( x \), 
\begin{gather*}
    \alpha_1=0=\alpha_2,\quad\alpha_3=-\tfrac{\sqrt3}{\sqrt2}dx^3,\quad \alpha_4=0=\alpha_5,\displaybreak[0]\\
    \alpha_6=-\tfrac{\sqrt3}{\sqrt2}dx^2,\quad\alpha_7=0,\quad\alpha_8=-\tfrac2{\sqrt3}dx^5,\quad
    \alpha_9=\tfrac1{\sqrt6}dx^5,\displaybreak[0]\\
    \alpha_{10}=-dx^4,\quad\alpha_{11}=-\tfrac1{\sqrt3}dx^4,\quad
    \alpha_{12}=\tfrac1{\sqrt6}dx^4,\displaybreak[0]\\
    \alpha_{13}=-dx^7,\quad\alpha_{14}=-\tfrac1{\sqrt3}dx^7,\quad\alpha_{15}=-\tfrac1{\sqrt6}dx^7,\quad
    \alpha_{16}=-dx^6,\displaybreak[0]\\ \alpha_{17}=\tfrac1{\sqrt3}dx^6,\quad
    \alpha_{18}=-\tfrac1{\sqrt6}dx^6,\quad
    \alpha_{19}=dx^1,\quad\alpha_{20}=\tfrac1{\sqrt3}dx^1,\quad\alpha_{21}=\tfrac1{\sqrt6}dx^1.
  \end{gather*}
In addition, we have
\begin{gather*}
    \beta_1=dx^{12}+dx^{47},\quad \beta_2=\tfrac1{\sqrt3}(dx^{12}-dx^{47}+2dx^{56}),\quad
    \beta_3=\tfrac1{\sqrt6}(-dx^{12}+dx^{47}+dx^{56}),
\displaybreak[0]\\
    \beta_4=dx^{13}+dx^{46},\quad\beta_5=\tfrac1{\sqrt3}(-dx^{13}+dx^{46}+2dx^{57}),\quad
    \beta_6=\tfrac1{\sqrt6}(dx^{13}-dx^{46}+dx^{57}),
\displaybreak[0]\\
    \beta_7=-dx^{14}+dx^{27},\quad\beta_8=-\tfrac1{\sqrt3}(dx^{14}+dx^{27}),\quad
    \beta_9=\tfrac1{\sqrt6}(-dx^{14}-dx^{27}+3dx^{36}),
\displaybreak[0]\\
    \beta_{10}=dx^{15},\quad \beta_{11}=\tfrac1{\sqrt3}(-dx^{15}+2dx^{26}),\quad
    \beta_{12}=\tfrac1{\sqrt6}(dx^{15}+dx^{26}+3dx^{37}),
\displaybreak[0]\\
    \beta_{13}=-dx^{16},\quad \beta_{14}=\tfrac1{\sqrt3}(dx^{16}+2dx^{25}),\quad
    \beta_{15}=\tfrac1{\sqrt6}(dx^{16}-dx^{25}+3dx^{34}),
\displaybreak[0]\\
    \beta_{16}=dx^{17},\quad \beta_{17}=\tfrac1{\sqrt3}(dx^{17}+2dx^{24}),\quad
    \beta_{18}=\tfrac1{\sqrt6}(-dx^{17}+dx^{24}+3dx^{35}),
\displaybreak[0]\\
    \beta_{19}=dx^{23},\quad \beta_{20}=\tfrac1{\sqrt3}(2dx^{45}-dx^{23}),\quad
    \beta_{21}=-\tfrac1{\sqrt6}(dx^{23}+dx^{45}-3dx^{67}).
  \end{gather*}

We now consider the evolution equation \eqref{eq:spin7-evol} for our connection ansatz \eqref{eq:conn-ansatz}. Focusing on the coefficient of \( e_3 \),
we have that the projection of \( A'\), \( \Hodge_7(dA\wedge r^4\Hodge_7\varphi) \), \( \Hodge_7(\tfrac12[A,A]\wedge r^4\Hodge_7\varphi) \)  are given by
\begin{equation*}
  -\tfrac{\sqrt3}{\sqrt2}\frac{d(ra)}{d r}dx^3, \quad   -2\tfrac{\sqrt3}{\sqrt2}a(r)dx^3,\quad   -3\tfrac{\sqrt3}{\sqrt2}ra^2dx^3,
\end{equation*}
respectively.
These considerations suggest that the \(\Spin(7)\)-instanton evolution equations \eqref{eq:spin7-evol} for the ansatz \eqref{eq:conn-ansatz} is:
\begin{equation*}
  \frac{d a}{d r}=\frac{a}{r}(1+3ar).
\end{equation*}
Indeed, it can be checked that the equations obtained from the remaining \( 20 \) components agree with this ODE. 

Summing up, we have shown:
\begin{proposition}
  \label{prop:instanton-Spin7}
 The ansatz \eqref{eq:conn-ansatz} on \( \bR^8 \) with \(\G=\Spin(7)\) yields a non-trivial \( \Spin(7) \)-instanton
  provided
  \begin{equation*}
    a(r)=-\frac23\frac{r}{r^2+C},
  \end{equation*}
  where \( C \) is a positive constant. This is the basic \( \Spin(7)\)-instanton on \( \bR^8\). \qed
\end{proposition}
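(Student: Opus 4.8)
The plan is to follow the template already used for $\bR^4$ and $\bR^7$, working with the evolution formulation \eqref{eq:spin7-evol} of the $\Spin(7)$-instanton condition rather than with the pointwise condition \eqref{eq:spin7}. The first step is to compute the curvature of the ansatz \eqref{eq:conn-ansatz}, viewing $A(r)$ as a connection on $\sS^7$ and writing $F_A = dA + \tfrac12[A,A]$ with $d$ the exterior derivative along $\sS^7$. As the radial factor $ra(r)$ is constant on each sphere, the linear part is $dA = ra\sum_j d\alpha_j\otimes e_j$ and the quadratic part is $\tfrac12[A,A] = \tfrac12(ra)^2\sum_{j,k}(\alpha_j\wedge\alpha_k)\otimes[e_j,e_k]$. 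Using that $\Spin(7)$ acts transitively on $\sS^7$ with stabiliser $\G_2$, it is enough to evaluate everything at the single point $x$ with $x_8=1$, where the forms $\alpha_j$ and $\beta_j|_{\sS^7}$ have already been tabulated.

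Before carrying out the explicit computation, it is worth recording why a single scalar equation should result. As $\G_2$-modules one has $\lie{spin}(7)\cong\g_2\oplus\bR^7$, while $T_x\sS^7\cong\bR^7$; since $\g_2$ and $\bR^7$ are inequivalent irreducibles, the only $\G_2$-equivariant map $\bR^7\to\g_2\oplus\bR^7$ is, up to scale, the inclusion onto the $\bR^7$ summand. Hence the space of $\Spin(7)$-invariant $\lie{spin}(7)$-valued $1$-forms on $\sS^7$ is one-dimensional, spanned precisely by the ansatz form $\sum_j\alpha_j\otimes e_j$. Since both $A'$ and the right-hand side of \eqref{eq:spin7-evol} are built equivariantly, each must be an $r$-dependent multiple of this form, so the instanton condition is guaranteed to collapse to a single ordinary differential equation; this also explains why computing any one component suffices.

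The substantive step is therefore to pin down the three coefficients. Substituting the ansatz into \eqref{eq:spin7-evol} and reading off the $e_3$-component at $x$, one finds that $A'$, $\Hodge_7(dA\wedge r^4\Hodge_7\varphi)$ and $\Hodge_7(\tfrac12[A,A]\wedge r^4\Hodge_7\varphi)$ contribute $-\tfrac{\sqrt3}{\sqrt2}\tfrac{d(ra)}{dr}dx^3$, $-2\tfrac{\sqrt3}{\sqrt2}a\,dx^3$ and $-3\tfrac{\sqrt3}{\sqrt2}ra^2\,dx^3$ respectively. Balancing these yields $\tfrac{d(ra)}{dr} = 2a + 3ra^2$, and writing the left-hand side as $a + r\tfrac{da}{dr}$ rearranges this to
\begin{equation*}
  \frac{da}{dr} = \frac{a}{r}(1+3ar).
\end{equation*}
I expect the main obstacle to be exactly this computation: it requires evaluating $d\alpha_j$, the structure constants $[e_j,e_k]$, the wedge products with $r^4\Hodge_7\varphi$, and the Hodge star $\Hodge_r$ on the round $\sS^7$, all at $x$. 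The representation-theoretic remark above lets me check only one component, but the coefficients it produces are what ultimately fix the constant $c_8 = -\tfrac23$.

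Finally I would integrate the ODE. The substitution $u = a/r$ used in the $\bR^4$ case turns it into the separable equation $\tfrac{du}{dr} = 3ru^2$, giving $-1/u = \tfrac32 r^2 + \mathrm{const}$ and hence $a = ru = -2r/(3r^2 + 2\,\mathrm{const})$. Choosing the constant so that the denominator has no zero for $r>0$ --- equivalently, so that the connection extends smoothly over all of $\bR^8$ --- produces $a(r) = -\tfrac23\,r/(r^2+C)$ with $C>0$, which is non-trivial since $a\not\equiv0$. This is the basic $\Spin(7)$-instanton.
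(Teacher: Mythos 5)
Your proposal is correct, and its computational core coincides with the paper's proof: both work with the evolution formulation \eqref{eq:spin7-evol}, evaluate at the point \( x \) with \( x_8=1 \), extract the \( e_3 \)-coefficients \( -\tfrac{\sqrt3}{\sqrt2}\tfrac{d(ra)}{dr}dx^3 \), \( -2\tfrac{\sqrt3}{\sqrt2}a\,dx^3 \), \( -3\tfrac{\sqrt3}{\sqrt2}ra^2\,dx^3 \), deduce the ODE \( \tfrac{da}{dr}=\tfrac{a}{r}(1+3ar) \), and integrate it (your substitution \( u=a/r \) giving \( \tfrac{du}{dr}=3ru^2 \) is a clean way to do this, and your normalisation \( C>0 \) and non-triviality remark are as in the statement).

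The one genuine difference is how the reduction to a single scalar equation is justified. The paper handles this by brute force: it states that the equations obtained from the remaining \( 20 \) components can be checked to agree with the same ODE. You instead argue a priori via equivariance: \( \sS^7=\Spin(7)/\G_2 \), the \( \G_2 \)-module splitting \( \lie{spin}(7)\cong\g_2\oplus\bR^7 \) with \( T_x\sS^7\cong\bR^7 \), and Schur's lemma (both summands being irreducible and inequivalent, with \( \bR^7 \) absolutely irreducible) show that the space of \( \Spin(7) \)-equivariant \( \lie{spin}(7) \)-valued \( 1 \)-forms on \( \sS^7 \) is one-dimensional, spanned by \( \sum_j\alpha_j\otimes e_j \); since \( A' \) and \( \Hodge_r(F_A\wedge r^4\Hodge_7\varphi) \) are built equivariantly from the ansatz, each is automatically a multiple of this form, so one component at one point determines everything. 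This is a real improvement: it replaces \( 20 \) verifications with a structural argument, and it also explains \emph{why} the ansatz closes up into an ODE at all (the same reasoning covers the \( \bR^4 \) and \( \bR^7 \) cases, and its failure mode is instructive for \cref{rem:basicansatz}). To make it fully rigorous you should record the two facts it silently uses: that \( \sum_j\alpha_j\otimes e_j \) satisfies \( g^*\!\left(\sum_j\alpha_j\otimes e_j\right)=\Ad_{g^{-1}}\circ\sum_j\alpha_j\otimes e_j \) for \( g\in\Spin(7) \) (this follows since the construction contracts the \( \Ad \)-invariant tensor \( \sum_j\beta_j\otimes e_j \) with the \( \Spin(7) \)-invariant radial field), and that the \( 4 \)-form \( r^4\Hodge_7\varphi \) appearing in \eqref{eq:spin7-evol} is \( \Spin(7) \)-invariant, which holds because \( \Phi \) is invariant and the action preserves \( r \) and \( dr \) in the splitting \eqref{eq:s7}. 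With those two observations added, your argument is complete and, if anything, tidier than the paper's.
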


As for the \( \G_2\)-case, the corresponding family of instantons on \( \Spin(7)\times\bR^8 \)
has been known to physicists for decades (see, for example,
\cite{Fairlie-N:instanton,Fubini-al:Spin7}).  Moreover, in this case a more general ansatz is used in the aforementioned papers, but only one explicit solution is provided, agreeing with the above.  No uniqueness result is yet known for the basic instanton on \( \bR^8 \): even local uniqueness is currently open.

\begin{remark}
Just as for the \( \G_2 \) case, the \( \Spin(7)\)-instanton condition is conformally invariant, in that it does not change when multiplying the defining \( 4\)-form for the \( \Spin(7)\)-structure by a positive function.  However, here the possibility of lifting the basic instanton on \( \bR^8\) to \( \sS^8\) does not even arise  as there is no \( \Spin(7)\)-structure on \( \sS^8\). This follows since the \( 8 \)-sphere has non-vanishing Euler characteristic, \(\mathcal X \), but vanishing pontryagin classes, \( p_i \), and so cannot satisfy the condition
\begin{equation*}
8\mathcal{X}=4p_2-p_1^2,
\end{equation*}
which is the necessary and sufficient condition for an orientable spin \( 8 \)-manifold to admit a \( \Spin(7) \)-structure (cf. \cite{GrayGreen:Spin7}).
\end{remark}

\section{Nahm's equations and quaternionic structures}
Prompted by \cref{rem:basicansatz}, we will now turn to an alternative ansatz
that has appeared in the physics literature (see \cite{Ivanova-P:instanton} and the references therein). This will allow us to construct families of explicit instantons on \( \bH^n=\bR^{4n} \), essentially rephrasing \cite[Sec. 5.1, Proposition 1]{Ivanova-P:instanton} in index free notation. 
In contrast with the basic instanton constructions, the gauge group \( \LK \) is not imposed to be the same as the internal symmetry group \( \G \). 

In this section, we let \( (J^1,J^2,J^3) \) be the standard triple of complex structures on \( \bH^n=\bR^{4n} \) and let \(\{ k_j \} \) be any basis of the Lie algebra \( \lk \) of \( \LK \).

\subsection{Nahm's equations and instantons on \( \bH \)}
Consider first the task of finding an instanton on \(\mathcal U\subset \bR^4 \) with given gauge group \( \LK \).   For a scalar function \( f \) defined on \( \mathcal U \), we make the following connection ansatz
\begin{equation}
\label{eq:ansatz4D}
\omega=\sum_{j=1}^3T_j(f)J^j(df),
\end{equation}
where 
\begin{equation*} 
T_i(s)=\sum_{j=1}^{\dim \lk}T_i^j(s)k_j,\qquad i=1,2,3, 
\end{equation*}
is a \( \lk \)-valued function defined on an interval \( I\subset\mathbb R \), containing the range of \( f \). 
For this ansatz, we have the following recipe for consructing self-dual and instantons, sligtly improving the statement of \cite[Sec 6.1, Prop. 2]{Ivanova-P:instanton}:
\begin{proposition}\label{prop:Nahm}
Consider a connection \( \omega \) on a principal \( \LK \)-bundle over \( \bR^4 \) given by the ansatz \eqref{eq:ansatz4D}.
If \( T=(T_1,T_2,T_3) \), \( T_i\colon I\to\lk \), satisfies Nahm's equations, 
\begin{equation}
\label{eq:Nahms1}
T'_i(s)=[T_j(s),T_k(s)],
\end{equation}
for each cyclic permutation \( (ijk)=(123) \). Then:
\begin{enumerate}
\item \( \omega \) is self-dual if and only if \( f \) satisfies the system: 
\begin{equation}
\label{eq:SD}
\frac{\partial^2 f}{\partial x_{pp}^2}=\frac{\partial^2 f}{\partial x_{qq}^2}\qquad\textrm{and}\qquad
\frac{\partial^2 f}{\partial x_p\partial x_q}=0,
\end{equation}
for all \( 1\leqslant p<q\leqslant4 \).
\item \( -\omega \) is anti-self-dual if and only if \( f \) is harmonic.
\end{enumerate}
\end{proposition}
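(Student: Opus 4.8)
The plan is to compute the curvature $\Omega=d\omega+\omega\wedge\omega$ of the ansatz \eqref{eq:ansatz4D} and split it into its self-dual and anti-self-dual parts, using the quaternionic relations among the $J^j$ together with Nahm's equations \eqref{eq:Nahms1} to organise the algebra. Writing $\theta_j=J^j(df)$, so that $\omega=\sum_j T_j(f)\,\theta_j$, I would first expand
\[
\Omega=\sum_{j} T_j'(f)\,df\wedge\theta_j+\sum_j T_j(f)\,d\theta_j+\tfrac12\sum_{j,k}[T_j(f),T_k(f)]\,\theta_j\wedge\theta_k,
\]
where the last sum uses the antisymmetry of $\theta_j\wedge\theta_k$ to produce the commutators. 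The point of assuming Nahm's equations is that $T_j'(f)=[T_k(f),T_l(f)]$ for cyclic $(jkl)$, so the first and third sums can be grouped by common commutator factor into $\sum_{\mathrm{cyc}}[T_k(f),T_l(f)]\,(df\wedge\theta_j+\theta_k\wedge\theta_l)$.

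The crux is three pointwise identities for the flat hyperk\"ahler structure, which I would verify by a direct computation at a point (expanding $df=\sum_a f_a\,dx^a$ in coordinates). Writing $\sigma_j=g(J^j\cdot,\cdot)$ for the three self-dual K\"ahler forms, the first identity is that, for cyclic $(jkl)$,
\[
df\wedge\theta_j+\theta_k\wedge\theta_l=\lvert df\rvert^2\,\sigma_j
\]
is self-dual, the off-diagonal quadratic terms cancelling exactly; the companion identity is that $\theta_k\wedge\theta_l-df\wedge\theta_j$ is anti-self-dual. The second identity concerns the Hessian term: since $J^j$ is parallel, $d\theta_j=d(J^j df)$ has coordinate coefficients the second derivatives $f_{pq}=\partial^2 f/\partial x_p\partial x_q$, and its Hodge split reads
\[
d\theta_j=\tfrac12(\Delta f)\,\sigma_j+\gamma_j,
\]
where $\gamma_j$ is anti-self-dual and its coefficients are precisely the off-diagonal second derivatives and the trace-free differences of the diagonal ones.

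Feeding the first identity into the grouped commutator sum shows that this part of $\Omega$ equals $\lvert df\rvert^2\sum_{\mathrm{cyc}}[T_k(f),T_l(f)]\,\sigma_j$ and is therefore self-dual. Hence the entire anti-self-dual part of $\Omega$ comes from the Hessian term, $\Omega^-=\sum_j T_j(f)\,\gamma_j$, and (taking the $T_j$ linearly independent, the non-degenerate case) this vanishes exactly when every $\gamma_j=0$; a short linear-algebra check shows the resulting relations are equivalent to the system \eqref{eq:SD}, i.e.\ to the Hessian of $f$ being a pointwise multiple of the identity. This proves (i). For (ii) I would instead compute $\Omega_{-\omega}=-d\omega+\omega\wedge\omega$; the sign flip on $d\omega$ means the grouped commutator term now involves $\theta_k\wedge\theta_l-df\wedge\theta_j$, which by the companion identity is anti-self-dual, so the only self-dual contribution to $\Omega_{-\omega}$ is the Laplacian piece $-\tfrac12(\Delta f)\sum_j T_j(f)\,\sigma_j$. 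As the $\sigma_j$ are linearly independent, this vanishes precisely when $\Delta f=0$, giving (ii).

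The main obstacle is the verification of the two quaternionic identities above, and in particular the clean cancellation that makes $df\wedge\theta_j+\theta_k\wedge\theta_l$ purely self-dual while $\theta_k\wedge\theta_l-df\wedge\theta_j$ is purely anti-self-dual: this is exactly the place where the algebra of the $J^j$ and the sign structure of Nahm's equations conspire, and it is what forces the asymmetry in the conclusions — self-duality of $\omega$ kills the anti-self-dual trace-free Hessian, whereas anti-self-duality of $-\omega$ kills only the self-dual Laplacian. The remaining ingredient, the Hodge split of $d(J^j df)$, is routine once the coordinate expressions for the $J^j$ are fixed, as is the final linear-algebra identification of $\{\gamma_j=0\}$ with \eqref{eq:SD}.
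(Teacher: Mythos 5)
Your proposal is correct and follows essentially the same route as the paper: expand the curvature, use Nahm's equations to pair each \( T_i'(f)\,df\wedge J^i(df) \) with the corresponding commutator term, invoke the quaternionic identity that makes \( df\wedge J^i(df)+J^j(df)\wedge J^k(df) \) self-dual (the paper phrases this as \( \Hodge\bigl(df\wedge J^i(df)\bigr)=J^j(df)\wedge J^k(df) \)), and thereby reduce self-duality of \( \omega \) (resp.\ anti-self-duality of \( -\omega \)) to the self-duality (resp.\ anti-self-duality) of each \( d\bigl(J^i(df)\bigr) \), which unwinds to \eqref{eq:SD} (resp.\ harmonicity of \( f \)). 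Your two explicit refinements --- the pointwise formula \( df\wedge\theta_j+\theta_k\wedge\theta_l=\lvert df\rvert^2\sigma_j \) and the Hodge split \( d\theta_j=\tfrac12(\Delta f)\,\sigma_j+\gamma_j \) with \( \gamma_j \) anti-self-dual --- are correct and slightly sharper than what the paper records (and you are, if anything, more careful than the paper in flagging the non-degeneracy of the \( T_j \) needed for the ``only if'' directions), but they amount to the same underlying computation.
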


\begin{proof}
The curvature of \( \omega \) is given by
\begin{equation}
\label{eq:curvAnsatz}
\Omega=\sum_{i=1}^3\left(T'_i(f)df\wedge J^i(df)+T_i(f)d\left(J^i(df)\right)+[T_j(f),T_k(f)]J^j(df) \wedge J^k(df)\right),
\end{equation}
where, as above, \((ijk)=(123) \) denotes a cyclic permutation of \( (123) \).
To proceed, we note that more explicitly,
\begin{equation*}
\begin{gathered}
J^1(df)=-f_2dx_1+f_1dx_2-f_4dx_3+f_3dx_4,\\
J^2(df)=-f_3dx_1+f_4dx_2+f_1dx_3-f_2dx_4,\\
J^3(df)=-f_4dx_1-f_3dx_2+f_2dx_3+f_1dx_4.
\end{gathered}
\end{equation*}
Here we have used the notation 
\( df=\sum_{j=1}^4f_jdx_j \), where \( f_j=\partial f/\partial x_j\). In particular, we have that
\begin{equation*}
\Hodge df\wedge J^i(df)=J^j(df)\wedge J^k(df),
\end{equation*}
with \((ijk)=(123) \). Therefore, if \( T \) satisfies Nahm's equations, then self-duality becomes equivalent to the condition
\begin{equation*}
d(J^i(df))=\Hodge d(J^i(df)),\qquad i=1,2,3,
\end{equation*}
which, more explicitly, can be phrased in terms of the system \eqref{eq:SD}. Similarly, anti-selfduality for \( -\omega \) reads,
\begin{equation*}
d(J^i(df))=-\Hodge d(J^i(df)),\qquad i=1,2,3,
\end{equation*}
which is equivalent to requiring that \( f \) is harmonic.
\end{proof}

\begin{remark}
A well-known solution, \( T_i\colon(0,\infty)\to\lk \), to Nahm's equations, sometimes referred to as the Nahm pole solution,  takes the form:
\begin{equation*}
T_i(s)=\frac1{2s}k_i,\qquad i=1,2,3,
\end{equation*}
where \( k_1,k_2,k_3 \) span an \( \su(2) \)-subalgebra of \( \lk \) and satisfy the commutation relations
\( [k_1,k_2]=-2k_3 \), etc. Recently, this solution has played a role in the study of the Kapustin-Witten equations (see, e.g., \cite{Mazzeo-Witten:KWeq,He-Mazzeo:KWeq}).
\end{remark}

Proposition \ref{prop:Nahm} allows us to produce families of explicit instantons on \( \bR^4 \). Some basic examples are given below.

\begin{example}
Given a solution of 
Nahm's equations and real constants \( A \), \( B_j \), \( C \), any function of the form
\begin{equation*}
f(\mathbf x)=A\mathbf x\cdot \mathbf x+\sum_{j=1}^4B_jx_j+C
\end{equation*}
produces an SD instanton on \( \bR^4 \). 
\end{example}

\begin{example}\label{eq:ASDdim4}
For the ASD case, we can produce instantons on \( \bR^4 \) starting from a solution to Nahm's equation and any function of the form
\begin{equation*}
\begin{gathered}
f(\mathbf x)=A(\mathbf x\cdot\mathbf x)^{-1}+A_1x_1^2+A_2x_2^2+A_3x_3^2-(A_1+A_2+A_3)x_4^2\\
+\sum_{1\leqslant i<j\leqslant 4}B_{ij}x_ix_j+\sum_{j=1}^4B_jx_j+C,
\end{gathered}
\end{equation*}
with \( A \), \( A_i \), \( B_{ij} \), \( B_i \) and \( C \) fixed real numbers; obviously, \( f \) is only defined away from zero if \( A\neq0 \).
\end{example}

\subsection{Quaternionic instantons on \(\mathbb H^n \)}
The connection ansatz \eqref{eq:ansatz4D} can be used, more generally, for \( \mathbb H^n=\bR^{4n} \). The associated curvature \( 2 \)-form is still given by \eqref{eq:curvAnsatz},
where, generalising the notation used above, we have: 
\begin{equation*}
\begin{gathered}
J^1(df)=\sum_{j=0}^{n-1}-f_{4j+2}dx_{4j+1}+f_{4j+1}dx_{4j+2}-f_{4j+4}dx_{4j+3}+f_{4j+3}dx_{4j+4},\\
J^2(df)=\sum_{j=0}^{n-1}-f_{4j+3}dx_{4j+1}+f_{4j+4}dx_{4j+2}+f_{4j+1}dx_{4j+3}-f_{4j+2}dx_{4j+4},\\
J^3(df)=\sum_{j=0}^{n-1}-f_{4j+4}dx_{4j+1}-f_{4j+3}dx_{4j+2}+f_{4j+2}dx_{4j+3}+f_{4j+1}dx_{4j+4}.
\end{gathered}
\end{equation*}

In order to generalise the notion of ASD instantons (see, for example, \cite{Carrion:Instanton}),
we introduce the "fundamental" \( 4 \)-form, \( \Sigma \),  on \( \bR^{4n} \) given by
\begin{equation*}
\Sigma=\tfrac12(\sigma_1^2+\sigma_2^2+\sigma_3^2),
\end{equation*}
where
\begin{equation*}
\begin{gathered}
\sigma_1=\sum_{j=0}^{n-1}dx_{4j+1,4j+2}+dx_{4j+3,4j+4},\quad
\sigma_2=\sum_{j=0}^{n-1}dx_{4j+1,4j+3}+dx_{4j+4,4j+2},\\
\sigma_3=\sum_{j=0}^{n-1}dx_{4j+1,4j+4}+dx_{4j+2,4j+3}.
\end{gathered}
\end{equation*}
In these terms, we have the following definition.

\begin{definition}\label{def:ASD} A generalisation of ASD instantons on \( \bH^n\) are given by connections
\( \omega \) whose curvature \(\Omega \) satisfies:
\begin{equation}\label{eq:quat2}
\Omega\wedge\Sigma^{n-1}=c_1\Hodge \Omega,
\end{equation}
where \( c_1=-(2n-1)!/2^{n-1} \). In this case, the curvature \( 2 \)-form
takes values in \( \lsp(n)\subset\lsp(1)\oplus\lsp(n) \).
\end{definition}

\cref{prop:Nahm} generalises to higher dimensional flat quaternionic space, \( \bH^n \), as follows.

\begin{proposition}\label{prop:NahmHn}
Consider a connection on a principal \( \LK \)-bundle over \( \bH^n \) given by the negative, \( -\omega \), of the ansatz \eqref{eq:ansatz4D}.
If \( T=(T_1,T_2,T_3) \), \( T_i\colon I\to\lk \), satisfies Nahm's equations \eqref{eq:Nahms1}, then \( -\omega \) is anti-self-dual if and only if \( f \) satisfies
the second order equations:
\begin{equation}\label{eq:ASDHnSecOrd}
d(J^i(df))\wedge\Sigma^{n-1}=c_1\Hodge d(J^i(df)),\qquad i=1,2,3,
\end{equation}
where \( c_1=-(2n-1)!/2^{n-1} \).
\qed
\end{proposition}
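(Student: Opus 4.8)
The plan is to mimic the proof of \cref{prop:Nahm}(ii), replacing four-dimensional anti-self-duality by membership in \( \lsp(n) \) as encoded by \eqref{eq:quat2}. First I would compute the curvature \( \Omega \) of \( -\omega \). The quadratic term \( \tfrac12[\omega\wedge\omega] \) is unchanged under \( \omega\mapsto-\omega \), whereas \( d\omega \) changes sign, so \eqref{eq:curvAnsatz} becomes
\begin{equation*}
\Omega=\sum_{i=1}^3\Big(-T_i'(f)\,df\wedge J^i(df)-T_i(f)\,d\big(J^i(df)\big)+[T_j(f),T_k(f)]\,J^j(df)\wedge J^k(df)\Big),
\end{equation*}
with \( (ijk) \) a cyclic permutation of \( (123) \). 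Substituting Nahm's equations \( T_i'=[T_j,T_k] \) merges the first and third summands:
\begin{equation*}
\Omega=\sum_{i=1}^3[T_j(f),T_k(f)]\,\beta_i\;-\;\sum_{i=1}^3 T_i(f)\,d\big(J^i(df)\big),\qquad \beta_i:=-\,df\wedge J^i(df)+J^j(df)\wedge J^k(df).
\end{equation*}

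The key step is to show that each \( \beta_i \) lies pointwise in \( \lsp(n) \), so that the entire first sum is \( \lsp(n) \)-valued and hence satisfies \eqref{eq:quat2} automatically. I would invoke the standard fact that \( \lsp(n)\subset\Lambda^2(\bR^{4n})^* \) is exactly the space of \( 2 \)-forms fixed by all three complex structures, in the sense \( \alpha(J^lX,J^lY)=\alpha(X,Y) \) for \( l=1,2,3 \); equivalently it is the subspace fixed by the quaternion group generated by \( J^1,J^2,J^3 \), whereas the complementary summands — the span of \( \sigma_1,\sigma_2,\sigma_3 \) together with the remaining irreducible piece — contain no such invariants. Since \( (p\wedge q)(J^l\cdot,J^l\cdot)=(J^lp)\wedge(J^lq) \) for any \( 1 \)-forms \( p,q \), a short computation using the quaternionic relations \( J^iJ^j=J^k=-J^jJ^i \) shows that \( -df\wedge J^i(df) \) and \( J^j(df)\wedge J^k(df) \) are each fixed by \( J^i \) and are interchanged by \( J^j \) and by \( J^k \); hence their sum \( \beta_i \) is fixed by all three and lies in \( \lsp(n) \). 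This is precisely the \( \bH^n \)-analogue of the identity \( \Hodge\big(df\wedge J^i(df)\big)=J^j(df)\wedge J^k(df) \) that renders the corresponding terms anti-self-dual in the proof of \cref{prop:Nahm}.

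It then remains to analyse the second sum: \( -\omega \) is anti-self-dual precisely when \( \sum_i T_i(f)\,d(J^i(df)) \) is \( \lsp(n) \)-valued. Since \eqref{eq:quat2} applied to the \( 2 \)-form \( d(J^i(df)) \) is literally \eqref{eq:ASDHnSecOrd}, it suffices to show that this holds if and only if each \( d(J^i(df)) \) individually lies in \( \lsp(n) \). The implication \( (\Leftarrow) \) is immediate. For \( (\Rightarrow) \), I would project onto the complement of \( \lsp(n) \) and show the three projections \( \pi^\perp\big(d(J^i(df))\big) \) cannot cancel against one another; verifying this decoupling is the main obstacle. I expect it to work as in the four-dimensional case, where one computes that the self-dual part of \( d(J^i(df)) \) equals \( \tfrac12(\Delta f)\,\sigma_i \), so that the three conditions are all controlled by the single scalar \( \Delta f \) and \( \sum_i T_i(f)\,\pi^\perp(d(J^i(df))) \) vanishes if and only if \( \Delta f=0 \). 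Generalising this Hessian computation to \( \bH^n \) — identifying \( \pi^\perp(d(J^i(df))) \) with explicit second-derivative combinations of \( f \) keyed to the index \( i \) — is the part I expect to absorb most of the effort, after which \eqref{eq:ASDHnSecOrd} follows.
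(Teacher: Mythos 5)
Your overall route is exactly the one the paper intends: the statement carries a terminal \(\qed\) precisely because it is meant to follow by re-running the proof of \cref{prop:Nahm}, and the parts of that argument you actually carry out are correct. The sign bookkeeping for the curvature of \(-\omega\) is right, and your key lemma -- that \(\beta_i=-df\wedge J^i(df)+J^j(df)\wedge J^k(df)\) is fixed by all three complex structures (each summand fixed by \(J^i\), the two summands swapped by \(J^j\) and \(J^k\)) and hence lies in \(\lsp(n)\), so that it satisfies \eqref{eq:quat2} automatically -- is the correct replacement for the four-dimensional identity \(\Hodge\bigl(df\wedge J^i(df)\bigr)=J^j(df)\wedge J^k(df)\), which has no direct analogue here since \(\Hodge\) no longer preserves \(2\)-forms. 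This reduces everything, as you say, to whether \(\sum_iT_i(f)\,d(J^i(df))\) being \(\lsp(n)\)-valued forces each \(d(J^i(df))\) to be \(\lsp(n)\)-valued.

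That decoupling step, which you leave as a sketch, is the one genuine gap, and your guess at how it generalises is not quite right: for \(n>1\) the obstruction is not a single scalar like \(\Delta f\). To close it, identify \(2\)-forms with skew-symmetric matrices; then \(d(J^i(df))\) corresponds (up to a universal constant) to the anticommutator \(\{J^i,H\}\) with \(H=\operatorname{Hess}f\). Split \(H=H_0+H_1\), where \(H_0=\tfrac14\bigl(H-\sum_lJ^lHJ^l\bigr)\) is the quaternion-linear part, which commutes with every \(J^l\), and \(H_1=H-H_0\), which satisfies \(\sum_lJ^lH_1J^l=H_1\). A short computation using this last relation and the quaternion identities shows \(\{J^i,H_1\}\) commutes with all three \(J^l\), i.e.\ lies in \(\lsp(n)\), whereas \(\{J^i,H_0\}=2J^iH_0\) commutes with \(J^i\) but anticommutes with \(J^j,J^k\), hence is orthogonal to \(\lsp(n)\). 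So \(\pi^\perp\bigl(d(J^i(df))\bigr)=2J^iH_0\) with the \emph{same} \(H_0\) for all \(i\): the object replacing \(\tfrac12(\Delta f)\sigma_i\) is the quaternion-Hermitian part of the Hessian, of dimension \(2n^2-n\) (compare the six PDEs in the paper's \(\bH^2\) example), not a scalar. The decoupling now follows: if \(H_0(x)\neq0\), then \(J^1H_0,J^2H_0,J^3H_0\) are linearly independent, because any nonzero combination \(aJ^1+bJ^2+cJ^3\) squares to \(-(a^2+b^2+c^2)\) times the identity and is therefore invertible; so \(\sum_iT_i(f(x))\otimes J^iH_0(x)=0\) forces \(T_i(f(x))=0\) for all \(i\), which for a Nahm solution not identically zero is impossible by ODE uniqueness. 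Hence \(H_0\equiv0\), which is \eqref{eq:ASDHnSecOrd}. Incidentally, this argument shows the three equations in \eqref{eq:ASDHnSecOrd} are mutually equivalent, and makes explicit that the ``only if'' direction -- here and in the paper's own \cref{prop:Nahm}, which glosses over this point entirely -- tacitly assumes \(T\not\equiv0\).
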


As in the \( 4 \)-dimensional case, it is easy to construct families of explicit solutions as illustrated below for \( \bH^2 \).

\begin{example}
In higher dimensions, the conditions \eqref{eq:ASDHnSecOrd} are more involved than for \( \bR^4 \). For example, in the case of \( \bH^2 \) this system
of second order PDEs for \( f \) can be phrased as:
\begin{equation*}
\begin{gathered}
\frac{\partial^2f}{\partial x_1^2}+\frac{\partial^2f}{\partial x_2^2}+\frac{\partial^2f}{\partial x_3^2}+\frac{\partial^2f}{\partial x_4^2}=0=\frac{\partial^2f}{\partial x_5^2}+\frac{\partial^2f}{\partial x_6^2}+\frac{\partial^2f}{\partial x_7^2}+\frac{\partial^2f}{\partial x_8^2}\\
\frac{\partial^2f}{\partial x_1\partial x_5}+\frac{\partial^2f}{\partial x_2\partial x_6}+\frac{\partial^2f}{\partial x_3\partial x_7}+\frac{\partial^2f}{\partial x_4\partial x_8}=0=\frac{\partial^2f}{\partial x_1\partial x_6}-\frac{\partial^2f}{\partial x_2\partial x_5}-\frac{\partial^2f}{\partial x_3\partial x_8}+\frac{\partial^2f}{\partial x_4\partial x_7}\\
\frac{\partial^2f}{\partial x_1\partial x_7}+\frac{\partial^2f}{\partial x_2\partial x_8}-\frac{\partial^2f}{\partial x_3\partial x_5}-\frac{\partial^2f}{\partial x_4\partial x_6}=0=\frac{\partial^2f}{\partial x_1\partial x_8}-\frac{\partial^2f}{\partial x_2\partial x_7}+\frac{\partial^2f}{\partial x_3\partial x_6}-\frac{\partial^2f}{\partial x_4\partial x_5}.
\end{gathered}
\end{equation*}
From these equations, it follows that, with any solution of Namh's equation, any function of the form
\begin{equation*}
\begin{gathered}
f(\mathbf x)=f_1(\mathbf x)+f_2(\mathbf x)+C_1x_1x_5+C_2x_2x_6+C_3x_3x_7-(C_1+C_2+C_3)x_4x_8\\
+D_1x_1x_6+D_2x_2x_5+D_3x_3x_8-(D_1-D_2-D_3)x_4x_7\\
+E_1x_1x_7+E_2x_2x_8+E_3x_3x_5+(E_1+E_2-E_3)x_4x_6\\
+F_1x_1x_8+F_2x_2x_7+F_3x_3x_6+(F_1-F_2+F_3)x_4x_5\\
\end{gathered}
\end{equation*}
where \( f_1 \) and \( f_2 \) are solutions of the type of \cref{eq:ASDdim4} restricted to the \( J_i \) invariant subspaces \( \{x_1,x_2,x_3,x_4\} \) and \( \{ x_5,x_6,x_7,x_8\}\), respectively, and \( C_i \), \( D_i \), \( E_i \) and \( F_i \) are constants. 
\end{example}

\begin{remark}
In view of \cref{prop:NahmHn}, it seems natural to ask for a generalisation of SD instantons on \( \bH^n\), using the ansatz \eqref{eq:curvAnsatz}. This would be
a connection \( \omega \) whose curvature \(\Omega \) satisfies:
\begin{equation}\label{eq:quat1}
\Omega\wedge\Sigma^{n-1}=c_2\Hodge \Omega,
\end{equation}
where \( c_2=(2n+1)!/(6n 2^{n-1}) \). This corresponds to the curvature \( 2 \)-form
taking values in \( \lsp(1)\subset\lsp(1)\oplus\lsp(n) \).

Direct computations show that the part of \cref{prop:Nahm} concerning SD instantons does not generalise to higher dimensional \( \bH^n \).
\end{remark}

In a sense, it is slightly misleading referring to the above instantons as being "quaternionic", since we are dealing with a flat hyperk\"ahler structure on \( \bH^n \) rather than
a genuine quaternionic structure. Explicit examples of quaternionic ASD instantons are known, e.g., on \( \bH P^n \), cf.~\cite{Mamone-Salamon:qKinstanton}; note that the authors of this paper use the opposite terminology for quaternionic ASD and SD instantons, respectively. These latter examples were revisited more recently in \cite{Devchand-al:qKinstanton} in the context of hyperk\"ahler cones. 

\newpage


\providecommand{\bysame}{\leavevmode\hbox to3em{\hrulefill}\thinspace}
\providecommand{\MR}{\relax\ifhmode\unskip\space\fi MR }
\providecommand{\MRhref}[2]{%
  \href{http://www.ams.org/mathscinet-getitem?mr=#1}{#2}
}
\providecommand{\href}[2]{#2}

\end{document}